\def\ca{{\mathcal A}}
\def\cf{{\mathcal F}}
\def\cg{{\mathcal G}}
\def\cl{{\mathcal L}}
\def\cn{{\mathcal N}}
\def\ct{{\mathcal T}}
\def\cs{{\mathcal S}}
\def\cx{{\mathcal X}}
\def\cw{{\mathcal W}}
\def\E{{\mathbb E}}
\def\N{{\mathbb N}}
\def\P{{\mathbb P}}
\def\R{{\mathbb R}}
\def\l{\ell}
\def\htau{\widehat{\tau}}
\def\t{\theta}
\def\htheta{\widehat{\theta}}
\def\hvt{\widehat{\vartheta}}
\def\nn{\cn\cn}
\def\s{\star}
\def\vt{\vartheta}
\def\ind#1{{\bf 1}_{\left\{#1\right\}}}
\def\abs#1{\left|#1\right|}
\def\inv#1{\mathop{\frac{1}{ #1}}\nolimits}
\def\expp#1{\mathop {\mathrm{e}^{ #1}}}
\theoremstyle{plain}
\newtheorem{theorem}{Theorem}[section]
\newtheorem{proposition}[theorem]{Proposition}
\newtheorem{lemma}[theorem]{Lemma}
\newtheorem{remarkh}[theorem]{Remark}
\newenvironment{remark}{\begin{remarkh}\rm}{\end{remarkh}}
\theoremstyle{nonumberplain}
\newtheorem{proof}{Proof}
\newcounter{hypo}
\renewcommand{\thehypo}{(${\mathcal H}$-\arabic{hypo})}
\newcommand{\dohypo}{\thehypo}
\newenvironment{hypo}[1][]{%
\refstepcounter{hypo}
\list{}{%
\settowidth{\labelwidth}{\dohypo}%
\setlength{\labelsep}{10pt}%
\setlength{\leftmargin}{\labelwidth}
\advance\leftmargin\labelsep%
}%
\item[\dohypo  #1]%
  }{%
\endlist
}
\title{Neural network regression\\ for Bermudan option pricing}
\date{\today}
\author{Bernard Lapeyre\thanks{Université Paris-Est, Cermics (ENPC), INRIA, F-77455 Marne-la-Vallée, France\newline
\indent email: \texttt{bernard.lapeyre@enpc.fr}} \and Jérôme Lelong
  \thanks{Univ. Grenoble Alpes, CNRS, Grenoble INP, LJK, 38000 Grenoble, France. \newline
\indent email: \texttt{jerome.lelong@univ-grenoble-alpes.fr}}}
\begin{document}
\maketitle

\begin{abstract}
  The pricing of Bermudan options amounts to solving a dynamic programming
  principle, in which the main difficulty, especially in high dimension, comes
  from the conditional expectation involved in the computation of the
  continuation value. These conditional expectations are classically computed by
  regression techniques on a finite dimensional vector space. In this work, we
  study neural networks approximations of conditional expectations.
  We prove the
  convergence of the well-known Longstaff and Schwartz algorithm when the
  standard least-square regression is replaced by a neural network
  approximation, assuming an efficient algorithm to compute this approximation.
  We illustrate the numerical efficiency of neural networks as an
  alternative to standard regression methods for approximating conditional
  expectations on several numerical examples.

  \noindent\textbf{Key words}: Bermudan options, optimal stopping,
  regression methods, deep learning, neural networks.

\end{abstract}

\section{Introduction}
\label{sec:intro}

Solving the backward recursion involved in the computation american
option prices has been a challenging problem for years and various
approaches have been proposed to approximate its solution. The real
difficulty lies in the computation of the conditional expectation
$\E[U_{T_{n+1}} | \cf_{T_n}]$ at each time step of the recursion. If
we were to classify the different approaches, we could say that there
are regression based approaches
(see~\cite{tilley93,carriere96,tsit:vanr:01, brgl04}) and quantization
approaches (see~\cite{bapa03,pages13_quantization}). We refer
to~\cite{BoucharWarin12} and~\cite{pages18numerical} for an in depth
survey of the different techniques to price Bermudan options.

Among all these available algorithms to compute american option prices using the
dynamic programming principle, the one proposed by~\cite{LS01} has the favour of
many practitioners. Their approach is based on iteratively selecting an optimal
policy. Here, we propose and analyse a version of this algorithm which uses
neural networks in order to compute an approximation of the conditional
expectation and then to obtain an optimal exercising policy.

The use of neural network for the computation of American option
prices is not new but we are aware of no work specifically devoted to
its use for LS-style algorithms (LS for~\cite{LS01}).
In \cite{haugh-kogan}, the authors used neural networks in numercial
experiments to price American options through the dynamic programing
equation on the value function.
This led them to a~\cite{tsit:vanr:01}-type algorithm which is
different from LS-type algorithm, studied in this paper,
which involve only the optimal stopping policy.
\cite{kohler10} used neural networks to price American options but
they also used the dynamic programing equation on the value
function. Moreover they used new samples of the whole path of the
underlying process $X$ at each time step $n$ to prove the convergence.
In our approach, we use a neural network inspired modification of the
original Longstaff-Schwartz algorithm and we draw a set of $M$ samples
with the distribution of $(X_{T_0},X_{T_1},\dots,X_{T_N} )$ before
starting and we use these very same samples at each time step. This
saves a lot of computational time by avoiding a very costly
resimulation at each time step, which very much improves the
efficiency of our approach. Deep learning was also used in the context
of optimal stopping by~\cite{becker2019deep, becker2019deep2} to
parametrize the optimal policy.

Now, we describe the framework of our study. We fix some finite time horizon
$T>0$ and a filtered probability space $(\Omega, \cf, (\cf_t)_{0 \le t \le T},
\P)$ modeling a financial market with $\cf_0$ being the trivial
$\sigma-$algebra. We assume that the short interest rate is modeled by an
adapted process $(r_t)_{0 \le t \le T}$ and that $\P$ is an associated risk
neutral measure. We consider a Bermudan option with exercising dates $0 = t_0
\le T_1 < T_2 < \dots < T_N = T$ and discounted payoff $\tilde Z_{T_n}$ if
exercised at time $T_n$. For convenience, we add $0$ and $T$ to the exercising
dates. This is definitely not a requirement of the method we propose here but it
makes notation lighter and avoids to deal with the purely European part involved
in the Bermudan option. We assume that the discrete time discounted payoff
process $(Z_{T_n})_{0 \le n \le N}$ is adapted to the filtration $(\cf_{T_n})_{0
\le n \le N}$ and that $\E[\max_{0 \le n \le N } \abs{Z_{T_n}}^2] < \infty$.

In a complete market, if $\E$ denote the expectation under the risk
neutral probability, standard arbitrage pricing arguments allows to
define the discounted value $(U_n)_{0 \le n \le N}$ of the Bermudan
option at times $(T_n)_{0 \le n \le N}$ by
\begin{equation}
  \label{eq:price}
  U_{T_n} = \sup_{\tau \in \ct_{T_n, T}} \E[Z_{\tau} | \cf_{T_n}].
\end{equation}
Using the Snell enveloppe theory, the sequence $U$ can be proved to be given by
the following dynamic programing equation
\begin{equation}
  \label{eq:dpp}
  \begin{cases}
    U_{T_N} & = Z_{T_N} \\
    U_{T_n} & = \max\left( Z_{T_n}, \E[U_{T_{n+1}} | \cf_{T_n}] \right), \quad 0 \le n  \le N-1.
  \end{cases}
\end{equation}
This equation can be rewritten in term of optimal policy.  Let
$\tau_n$ be the smallest optimal policy after time $T_n$ --- the
smallest stopping time reaching the supremum in~\eqref{eq:price} ---
then
\begin{equation}
  \label{eq:policy-iteration}
  \begin{cases}
    \tau_N = T_N \\
    \tau_n = T_n \ind{Z_{T_n} \ge \E[Z_{\tau_{n+1}} | \cf_{T_n}]}
            + \tau_{n+1} \ind{Z_{T_n} < \E[Z_{\tau_{n+1}} | \cf_{T_n}]}.
  \end{cases}
\end{equation}
All these methods based on the dynamic programming principle either as
value iteration~\eqref{eq:dpp} or policy
iteration~\eqref{eq:policy-iteration} require a Markovian setting to
be implemented such that the conditional expectation knowing the whole
past can be replaced by the conditional expectation knowing only the
value of a Markov process at the current time.
We assume that the discounted payoff process writes
$Z_{T_n} = \phi_n(X_{T_n})$, for any $0 \le n \le N$, where
$(X_t)_{0 \le t \le T}$ is an adapted Markov process taking values in
$\R^r$. Hence, the conditional expectation involved
in~\eqref{eq:policy-iteration} simplifies into
$\E[Z_{\tau_{n+1}} | \cf_{T_n}] = \E[Z_{\tau_{n+1}} | X_{T_n}]$ and
can therefore be approximated by a standard least square method.

Note that this setting allows to consider most standard financial
models.  For local volatility models, the process $X$ is typically
defined as $X_t = (r_t, S_t)$, where $S_t$ is the price of an asset
and $r_t$ the instantaneous interest rate (only $X_t = S_t$ when the
interest rate is deterministic). In the case of stochastic volatility
models, $X$ also includes the volatility process $\sigma$,
$X_t = (r_t, S_t, \sigma_t)$. Some path dependent options can also fit
in this framework at the expense of increasing the size of the process
$X$. For instance, in the case of an Asian option with payoff
$(\frac{1}{T} A_T - S_T)_+$ with $A_t = \int_0^t S_u du$, one can
define $X$ as $X_t = (r_t, S_t, \sigma_t, A_t)$ and then the Asian
option can be considered as a vanilla option on the two dimensional
but non tradable assets $(S, A)$.

Once the Markov process $X$ is identified, the conditional expectations can be written
\begin{equation}
  \label{eq:EMarkov}
 \E[Z_{\tau_{n+1}} | \cf_{T_n}]  = \E[Z_{\tau_{n+1}} | X_{T_n}] = \psi_n(X_{T_n})
\end{equation}
where $\psi_n$ solves the following minimization problem
\begin{equation*}
  \inf_{\psi \in L^2(\cl(X_{T_n}))} \E\left[\abs{Z_{\tau_{n+1}} - \psi(X_{T_n})}^2\right]
\end{equation*}
with $L^2(\cl(X_{T_n}))$ being the set of all measurable functions $f$ such that
$\E[f(X_{T_n})^2] < \infty$.  The real challenge comes from properly
approximating the space $L^2(\cl(X_{T_n}))$ by a finite dimensional space: one
typically uses polynomials or local bases (see~\cite{lemor_05,BoucharWarin12})
and in any case it always boils down to a linear regression. In this work, we
use neural networks to approximate $\psi_n$ in~\eqref{eq:EMarkov}. The main
difference between neural networks and the regression approaches commonly used
comes from the non linearity of neural networks, which also makes their
strength. Note that the set of neural networks with a fixed number of layers and
neurons is obviously not a vector space and not even convex. Through neural
networks, this paper investigates the effects of using non linear approximations
of conditional expectations in the Longstaff Schwartz algorithm.

The paper is organized as follows. In Section~\ref{sec:nn}, we start
with some preliminaries on neural networks and recall the universal
approximation theorem. Then, in Section~\ref{sec:algo}, we describe
our algorithm, whose convergence is studied in
Section~\ref{sec:cv}. Finally, we present some numerical results in
Section~\ref{sec:numerics}.

\section{Preliminaries on deep neural network}
\label{sec:nn}

Deep Neural networks (DNN) aim at approximating (complex non linear)
functions defined on finite-dimensional spaces, and in contrast with
the usual additive approximation theory built via basis functions,
like polynomials, they rely on composition of layers of simple
functions. The relevance of neural networks comes from the universal
approximation theorem and the Kolmogorov-Arnold representation theorem
(see~\cite{arnold57,kolmogorov56,cybenko89,hornik91,pinkus1999approximation}),
and this has shown to be successful in numerous practical
applications.

We consider the feed forward neural network --- also called multilayer
perceptron --- for the approximation of the continuation value at each
time step. From a mathematical point view, we can model a DNN by a non
linear function
\[
x \in \cx \subset \R^r \longmapsto \Phi(x; \theta) \in \R
\]
where $\Phi$ typically writes as function compositions. Let $L \ge 2$
be an integer, we write
\begin{equation}
  \label{eq:nn}
  \Phi = A_L \circ \sigma_{a} \circ A_{L-1} \circ \dots \circ \sigma_{a} \circ A_1
\end{equation}
where for $\l=1,\dots,L$, $A_\l : \R^{d_{\l-1}} \to \R^{d_\l}$ are affine functions
\begin{equation*}
  A_\l(x)  = \cw_\l x + \beta_\l \quad \mbox{ for } x \in \R^{d_{\l-1}},
\end{equation*}
with $\cw_\l \in \R^{d_{\l} \times d_{\l-1}}$, and $\beta_\l \in \R^{d_\l}$.  In
our setting, we have $d_1 = r$ and $d_L = 1$.  The function $\sigma_{a}$ is
often called the \emph{activation function} and is applied component wise. The
number $d_\l$ of rows of the matrix $\cw_\l$ is usually interpreted as the
number of neurons of layer $\l$.  For the sake of simpler notation, we embed all
the parameters of the different layers in a unique high dimensional parameter
$\theta$ and $\theta = (\cw_\l, \beta_\l)_{\l=1,\dots,L} \in \R^{N_d}$ with
$N_{d} = \sum_{\l = 1}^L d_\l (1 + d_{\l-1})$.

Let $L>0$ be fixed in the following, we introduce the set $\nn_\infty$
of all DNN of the above form. Now, we need to restrict the maximum
number of neurons per layer. Let $p \in \N$, $p>1$, we denote by
$\nn_p$ the set of neural networks with at most $p$ neurons per hidden layer
and $L-1$ layers and bounded parameters. More precisely, we pick an
increasing sequence of positive real numbers $(\gamma_p)_p$ such that
$\lim_{p \to \infty} \gamma_p = \infty$. We introduce the set
\begin{equation}
  \label{eq:dnn-parameters}
  \Theta_p = \{ \theta \in \R^r \times \R^{p \times r} \times {(\R^p \times \R^{p \times p})}^{L-2} \times \R \times \R^{p} \, : \, \abs{\theta} \le \gamma_p \}.
\end{equation}
Then, $\nn_p$ is defined by
\begin{align*}
  \cn\cn_p = \{ \Phi(\cdot; \theta ): \R^r \to \R \, ; \, \theta \in \Theta_p  \}
\end{align*}
and we have $\nn_\infty = \cup_{p \in \N} \nn_p$. An element of
$\nn_p$ with be denoted by $\Phi_p(\cdot; \theta)$ with
$\theta \in \Theta_p$. Note that the space $\nn_p$ is not a vector
space, nor a convex set and therefore finding the element of $\nn_p$
that best approximates a given function cannot be simply interpreted
as an orthogonal projection.

The use of DNN as function approximations is justified by the
fundamental results of~\cite{hornik91} (see
also~\cite{pinkus1999approximation} for related results).
\begin{theorem}[Universal Approximation Theorem]
  \label{thm:dnn-P}
  Assume that the function $\sigma_{a}$ is non constant and bounded. Let
  $\mu$ denote a probability measure on $\R^r$, then for any
  $L \ge 2$, $\nn_\infty$ is dense in $L^2(\R^r, \mu)$.
\end{theorem}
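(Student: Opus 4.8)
The plan is to reduce the statement for $L$ layers to the classical single‑hidden‑layer case $L=2$ and to quote the universal approximation result of~\cite{hornik91} (see also~\cite{cybenko89}) for that base case. The starting remark is that, for a fixed $L$, the set $\nn_\infty=\cup_{p\in\N}\nn_p$ is a \emph{linear subspace} of $L^2(\R^r,\mu)$: the sum of two networks is realised by concatenating their first affine maps into a single wider affine map, making all intermediate affine maps block‑diagonal, and adding the two scalar outputs inside the last affine map; scalar multiples and constant shifts are immediate, and the unbounded widths and parameters allowed in $\nn_\infty$ make these operations legitimate. Consequently it suffices to show that the orthogonal complement of $\nn_\infty$ in $L^2(\R^r,\mu)$ reduces to $\{0\}$, i.e.\ that any $g\in L^2(\R^r,\mu)$ with $\int_{\R^r}\Phi\,g\,d\mu=0$ for every $\Phi\in\nn_\infty$ vanishes $\mu$‑a.s.

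For $L=2$ an element of $\nn_\infty$ is a finite sum $x\mapsto c_0+\sum_{j}c_j\,\sigma_a(w_j\cdot x+b_j)$, so the orthogonality condition is equivalent to $\int_{\R^r}\sigma_a(w\cdot x+b)\,g(x)\,d\mu(x)=0$ for all $w\in\R^r$, $b\in\R$. Setting $\rho=g\,\mu$, which is a finite signed measure because $g\in L^1(\mu)$, this says precisely that a bounded non‑constant function is annihilated by no nonzero finite signed measure, i.e.\ is \emph{discriminatory}: fixing the direction $w$ reduces the question to dimension one, and a Fourier/mollification argument then forces $\rho=0$, hence $g=0$ $\mu$‑a.s. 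I would not reprove this but simply invoke~\cite{hornik91}; it is the only genuinely analytic ingredient. This gives the theorem for $L=2$, and for an arbitrary probability measure on an arbitrary Euclidean space --- a form used in the next step.

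To pass from $L-1$ to $L\ge 3$ I would argue by induction, staying entirely inside $L^2$ so that no continuity of $\sigma_a$ is needed. Given $f\in L^2(\R^r,\mu)$ and $\varepsilon>0$, the case $L=2$ yields a one‑hidden‑layer network $N=A_2\circ u$ with $\norm{f-N}_{L^2(\mu)}<\varepsilon$, where $u=\sigma_a\circ A_1:\R^r\to\R^m$ takes values in a fixed compact box $B\subset\R^m$ --- the one place boundedness of $\sigma_a$ is used --- and $A_2:\R^m\to\R$ is affine. Let $\nu$ be the image of $\mu$ under $u$, a compactly supported probability measure on $\R^m$, so that $A_2\in L^2(\R^m,\nu)$. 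By the induction hypothesis applied in dimension $m$ with the measure $\nu$, there is an $(L-1)$‑layer network $\psi$ on $\R^m$ with $\norm{\psi-A_2}_{L^2(\nu)}<\varepsilon$. Then $\psi\circ u=\psi\circ\sigma_a\circ A_1$ is an $L$‑layer network of the form~\eqref{eq:nn}, and the change of variables formula gives
\[
\norm{\psi\circ u-N}_{L^2(\mu)}^2=\int_{\R^m}\abs{\psi-A_2}^2\,d\nu<\varepsilon^2 .
\]
Hence $\norm{f-\psi\circ u}_{L^2(\mu)}<2\varepsilon$, and since $\psi\circ u\in\nn_\infty$ this proves density for $L$.

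The only real obstacle is the base case $L=2$, namely the discriminatory property of a bounded non‑constant activation; the linear‑subspace observation, the reduction to a trivial orthogonal complement and the collapsing induction are all soft. It is worth stressing that the induction of the previous paragraph must be done in $L^2$ rather than through uniform approximation on compact sets: the latter would be marginally cleaner but is false when $\sigma_a$ is discontinuous, which the hypotheses of the theorem allow.
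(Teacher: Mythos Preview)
The paper does not give its own proof of this theorem; it is simply quoted from~\cite{hornik91} (and~\cite{pinkus1999approximation}) as background. So there is nothing to compare against, and the question is only whether your sketch stands on its own.

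It does. The base case $L=2$ is precisely Hornik's $L^p(\mu)$ density theorem for bounded non-constant activations, which you rightly decline to reprove. Your induction from $L-1$ to $L$ is correct and is the clean way to handle depth under these weak hypotheses: approximate $f$ by a two-layer network $N=A_2\circ u$, push $\mu$ forward by the bounded map $u=\sigma_a\circ A_1$ to a compactly supported $\nu$, approximate the affine $A_2$ in $L^2(\nu)$ by an $(L-1)$-layer network $\psi$ via the induction hypothesis, and compose. The change-of-variables identity you wrote is exactly what makes the $L^2$ errors match, and $\psi\circ\sigma_a\circ A_1$ has the right structure~\eqref{eq:nn}. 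Your remark that the induction must be carried out in $L^2$ rather than via uniform approximation is well taken: with a merely bounded, possibly discontinuous $\sigma_a$, the uniform route (Theorem~\ref{thm:dnn-cv}) is not available.

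Two minor comments. First, the linear-subspace / orthogonal-complement reduction you open with is correct but ultimately idle: you use it only to rephrase the $L=2$ case in Cybenko's ``discriminatory'' language before immediately deferring to Hornik, and your inductive step is a direct density argument that never invokes it. Second, your parenthetical ``Fourier/mollification argument then forces $\rho=0$'' is a considerable understatement of the analytic work in the base case --- but since you explicitly cite~\cite{hornik91} rather than claim to prove it, that is fine.
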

\begin{theorem}[Universal Approximation Theorem]
  \label{thm:dnn-cv}
  Assume that the function $\sigma_{a}$ is a non constant, bounded and
  continuous function, then, when $L=2$, $\nn_\infty$ is dense into
  $C(\R^r)$ for the topology of the uniform convergence on compact
  sets.
\end{theorem}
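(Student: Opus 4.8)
The statement is the classical universal approximation theorem for networks with a single hidden layer; it is one of the fundamental results of~\cite{hornik91} and is also contained in~\cite{pinkus1999approximation}, so in the paper it is enough to cite these references. (Note that it is genuinely stronger than Theorem~\ref{thm:dnn-P}: $L^2$-density does not imply uniform density, so a separate argument is needed, and this is where continuity of $\sigma_a$ enters.) Let me describe the structure of the proof. When $L=2$, since the width is unbounded over $\nn_\infty=\cup_p\nn_p$ and $\gamma_p\to\infty$, an element of $\nn_\infty$ is precisely a finite combination $x\mapsto\beta_2+\sum_{j=1}^{N}c_j\,\sigma_a(\langle w_j,x\rangle+b_j)$ with $N\in\N$, $c_j,b_j,\beta_2\in\R$ and $w_j\in\R^r$. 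Call $\cg$ this linear subspace of $C(\R^r)$; it contains the constants. Since the topology of uniform convergence on compact sets is generated by the seminorms $f\mapsto\sup_K\abs{f}$, $K\subset\R^r$ compact, it suffices to show that $\{g|_K:g\in\cg\}$ is dense in $(C(K),\norm{\cdot}_\infty)$ for every compact $K$.

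I would argue by duality. Assume that $\cg|_K$ fails to be dense in $C(K)$ for some compact $K$. By the Hahn--Banach theorem there is a nonzero continuous linear functional on $C(K)$ annihilating $\cg|_K$, and by the Riesz representation theorem it is of the form $f\mapsto\int_K f\,\rpartial\mu$ for a finite signed Borel measure $\mu\neq0$. Evaluating on the generators of $\cg$ gives
\[
\int_K\sigma_a(\langle w,x\rangle+b)\,\rpartial\mu(x)=0\qquad\text{for all } w\in\R^r,\ b\in\R .
\]
Fixing a direction $u\in\R^r$ and pushing $\mu$ forward along $x\mapsto\langle u,x\rangle$ produces a compactly supported finite signed measure $\nu_u$ on $\R$ such that $\int_\R\sigma_a(\alpha s+b)\,\rpartial\nu_u(s)=0$ for all $\alpha,b\in\R$ (the dilation $\alpha$ coming from rescaling $u$). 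If one can show $\nu_u=0$ for every $u$, then $\widehat\mu(tu)=\widehat{\nu_u}(t)=0$ for all $t$ and $u$, hence $\widehat\mu\equiv0$ and $\mu=0$ by injectivity of the Fourier transform --- the desired contradiction.

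Everything thus reduces to the one-dimensional claim that a finite signed measure $\nu$ on $\R$ with $\int\sigma_a(\alpha s+b)\,\rpartial\nu(s)=0$ for all $\alpha,b$ must vanish, equivalently that $\Span\{s\mapsto\sigma_a(\alpha s+b):\alpha,b\in\R\}$ is dense in $C(\operatorname{supp}\nu)$; this is the classical density criterion for non-polynomial activations, and a bounded non-constant continuous function is not a polynomial. I would prove it as follows. First, $\alpha=0$ already forces $\nu(\R)=0$, since $\sigma_a\not\equiv0$. Next, smoothing the identity in the translation parameter $b$ against Schwartz functions with compactly supported Fourier transform, and using Fubini, shows that $\nu$ also annihilates $s\mapsto g(\alpha s)$ for every $\alpha$ and every $g$ of the form $\sigma_a\ast\psi$ with $\widehat\psi\in C_c^\infty$. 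Taking Fourier transforms, this reads $\langle\widehat{\sigma_a},\varphi\cdot\widehat\nu(-\alpha\,\cdot)\rangle=0$ for all $\alpha$ and all $\varphi\in C_c^\infty$; if $\nu\neq0$ then $\widehat\nu$ is a nonzero real-analytic function, so near any $\xi_0\neq0$ one may divide $\varphi$ by $\widehat\nu(-\alpha\,\cdot)$ for a suitable $\alpha$ and conclude that $\widehat{\sigma_a}$ vanishes in a neighbourhood of $\xi_0$. Hence $\operatorname{supp}\widehat{\sigma_a}\subseteq\{0\}$, so $\sigma_a$ is a polynomial, hence constant by boundedness --- a contradiction, so $\nu=0$. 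An equivalent but more hands-on variant treats $\sigma_a=\cos$ first, where $\Span\{\cos(\langle w,x\rangle+b)\}$ is a point-separating subalgebra of $C(K)$ containing the constants (by the product-to-sum formulas) and hence dense by Stone--Weierstrass, and then approximates $\cos$ uniformly on compact intervals by $\sigma_a$-ridge combinations.

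The main obstacle is precisely this last point: extracting, from the bare hypothesis ``non constant, bounded, continuous'', enough spectral content of the ridge functions $s\mapsto\sigma_a(\alpha s+b)$ to reconstruct oscillatory behaviour (equivalently, to annihilate the projected measures $\nu_u$). The surrounding machinery --- Hahn--Banach, the Riesz representation, Fubini for the mollification, pushforward measures, injectivity of the Fourier transform --- is routine. Since the theorem is proved in full generality in~\cite{hornik91,pinkus1999approximation}, the paper will simply cite them, the above being the skeleton of their proofs.
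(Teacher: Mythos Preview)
Your reading is correct: the paper does not prove Theorem~\ref{thm:dnn-cv} at all; it merely states it as one of ``the fundamental results of~\cite{hornik91} (see also~\cite{pinkus1999approximation})'' and moves on. Your proposal explicitly anticipates this, and the sketch you supply --- Hahn--Banach/Riesz duality on $C(K)$, reduction to one dimension by pushforward along directions, and the spectral argument showing that the annihilating measure forces $\operatorname{supp}\widehat{\sigma_a}\subset\{0\}$ hence $\sigma_a$ polynomial --- is precisely the line of argument in those references, so nothing is missing.
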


\begin{remark} \label{rem:dnn-cv} We can rephrase Theorem~\eqref{thm:dnn-P} in
  terms of approximating random variables. Let $Y$ be a real valued random
  variable defined on $(\Omega, \ca)$ s.t. $\E[Y^2] < \infty$. Let $X$ be an
  other random variable defined on $(\Omega, \ca)$ taking values in $\R^r$ and
  $\cg \subset \ca$ the smallest $\sigma-$algebra such that $X$ is $\cg$
  measurable. Then, there exists a sequence $(\theta_p)_{p \ge 2} \in
  \prod_{p=2}^\infty \Theta_p$, such that $\lim_{p \to \infty} \E[\abs{Y -
  \Phi_p(X; \theta_p)}^2] = 0$. Therefore, if for every $p \ge 2$, $\alpha_p \in
  \Theta_p$ solves
  \begin{align*}
    \inf_{\theta \in \Theta_p} \E[\abs{\Phi_p(X; \theta) - Y}^2],
  \end{align*}
  then the sequence $(\Phi_p(X; \alpha_p))_{p \ge 2}$ converges to
  $\E[Y|\cg]$ in $L^2(\Omega)$ when $p\to \infty$. Note that as long as the activation function $\sigma_a$ is bounded, $\Phi_p(X; \alpha_p) \in L^2(\Omega)$ for every $p \ge 2$.
\end{remark}

\section{The algorithm}
\label{sec:algo}

\subsection{Description of the algorithm}

We recall the dynamic programming principle on the optimal policy 
\begin{equation*}
  \begin{cases}
    \tau_N = T_N \\
    \tau_n = T_n \ind{Z_{T_n} \ge \E[Z_{\tau_{n+1}} | \cf_{T_n}]} + \tau_{n+1} \ind{Z_{T_n} < \E[Z_{\tau_{n+1}} | \cf_{T_n}]}, \; \text{for $1 \le n \le N-1$}.
  \end{cases}
\end{equation*}
Then, the time$-0$ price of the Bermudan option writes
\begin{equation*}
  U_0 = \max(Z_0, \E[Z_{\tau_1}]).
\end{equation*}
In order to solve this dynamic programming equation we need to
compute a conditional expectation at each time step. The
idea proposed by~\cite{LS01} was to approximate these conditional
expectations by a regression problem on a well chosen set of
functions. In this work, we use a DNN to perform this approximation.
\begin{equation}
  \label{eq:policy-iteration-dnn}
  \begin{cases}
    \tau_N^p = T_N^p \\
    \tau_n^p = T_n \ind{Z_{T_n} \ge \Phi_p(X_{T_n}; \theta_n^p)} +
    \tau_{n+1} \ind{Z_{T_n} < \Phi_p(X_{T_n}; \theta_n^p)}, \;
    \text{for $1 \le n \le N-1$}
  \end{cases}
\end{equation}
where $\theta_n^p$ solves the following optimization problem
\begin{align}
  \label{eq:theta_p}
  \inf_{\theta \in \Theta_p} \E\left[ \abs{\Phi_p(X_{T_n}; \theta) - Z_{\tau_{n+1}^p}}^2 \right].
\end{align}
Since the conditional expectation operator is an orthogonal
projection, we have
\begin{align*}
  \E\left[ \abs{\Phi_p(X_{T_n}; \theta) - Z_{\tau_{n+1}^p}}^2 \right] = & \E\left[ \abs{\Phi_p(X_{T_n}; \theta) - \E\left[Z_{\tau_{n+1}^p} | \cf_{T_n}\right] }^2 \right] \\
  & + \E\left[ \abs{Z_{\tau_{n+1}^p} - \E\left[Z_{\tau_{n+1}^p} | \cf_{T_n}\right] }^2\right].
\end{align*}
Therefore, any minimizer in~\eqref{eq:theta_p} is also a solution to
the following minimization problem
\begin{align}
  \label{eq:theta_p-cond}
  \inf_{\theta \in \Theta_p} \E\left[ \abs{\Phi_p(X_{T_n}; \theta) - \E\left[Z_{\tau_{n+1}^p} | \cf_{T_n} \right]}^2 \right].
\end{align}

The standard approach is to sample a bunch of paths of the model
$X^{(m)}_{T_0}, X^{(m)}_{T_1},\dots, X^{(m)}_{T_N}$ along with the
corresponding payoff paths
$Z^{(m)}_{T_0}, Z^{(m)}_{T_1},\dots, Z^{(m)}_{T_N}$, for
$m=1,\dots,M$. To compute the $\tau_n$'s on each path, one needs to
compute the conditional expectations $\E[Z_{\tau_{n+1}} | \cf_{T_n}]$
for $n=1,\dots,N-1$. Then, we introduce the final approximation of the
backward iteration policy, in which the truncated expansion is
computed using a Monte Carlo approximation
\begin{equation*}
  \begin{cases}
    \htau_N^{p, (m)} = T_N \\
    \htau_n^{p, (m)} = T_n \ind{Z_{T_n}^{(m)} \ge \Phi_{p}(X_{T_n}^{(m)}; \htheta_n^{p,M})} + \htau_{n+1}^{p,(m)} \ind{Z_{T_n}^{(m)} < \Phi_{p}^{(m)}(X_{T_n}^{(m)}; \htheta_n^{p,M})}, \; \text{for $1 \le n \le N-1$}
  \end{cases}
\end{equation*}
where $\htheta_n^{p,M}$ solves the sample average approximation of~\eqref{eq:theta_p}
\begin{align}
  \label{eq:theta_pM}
  \inf_{\theta \in \Theta_p} \inv{M} \sum_{m=1}^M \abs{\Phi_p(X_{T_n}^{(m)}; \theta) - Z^{(m)}_{\tau_{n+1}^{p, (m)}}}^2.
\end{align}

Then, we finally approximate the time$-0$ price of the option by
\begin{align}
  \label{eq:price-mc}
  U_0^{p,M} = \max\left(Z_0, \inv{M} \sum_{m=1}^M Z^{(m)}_{\htau_1^{p,(m)} }\right).
\end{align}

\begin{remark}
  Note that to implement the previous algorithm we need to compute a
  minimizer for the optimization
  problem~(\ref{eq:theta_pM}). Obviously this is not an easy task as
  this is a high-dimensional, non-convex and non smooth problem.

  It is usually solved in practice using toolboxes as
  \emph{Scikit-Learn} 
  or \emph{TensorFlow}, 
  by means of a stochastic gradient descent method for which a full
  convergence proof under realistic assumptions are still unknown in
  our knowledge. See~\cite{bottou2018optimization} or
  \cite{e2020mathematical} for recent in depth reviews of these
  subjects
  and~\cite{ghadimi2013stochastic}, 
  \cite{lei2019stochastic}, \cite{fehrman2020convergence}
  for 
  results for a non convex function.

\end{remark}

\section{Convergence of the algorithm}
\label{sec:cv}

We start this section on the study of the convergence by introducing
some bespoke notation following~\cite{clp02}.

\subsection{Notation}

First, it is important to note that the paths
$\tau_1^{p,(m)},\dots,\tau_N^{p,(m)}$ for $m=1,\dots,M$ are
identically distributed but not independent since the computations of
$\theta^p_n$ at each time step $n$ mix all the paths. We define the
vector $\vt$ of the coefficients of the successive expansions
$\vt^p = (\theta_1^p, \dots, \theta_{N-1}^p)$ and its Monte Carlo
counterpart
$\hvt^{p,M} = (\htheta_1^{p,M},.\dots,\htheta_{N-1}^{p,M})$.

Now, we recall the notation used by~\cite{clp02} to study the convergence of the original Longstaff Schwartz approach.\\
Given a deterministic parameter $t^p = (t_1^p, \dots, t_{N-1}^p)$ in ${\Theta_p}^{N-1}$ and deterministic vectors $z = z_1,\dots,z_N$ in $\R^N$ and $x = (x_1, \dots, x_N)$ in $(\R^r)^N$, we define the vector field $F= F_1, \dots, F_N$ by
\begin{align*}
  \begin{cases}
    F_N(t^p, z, x) &= z_N \\
    F_n(t^p, z, x) &= z_n \ind{z_n \ge \Phi_p(x_n; t^p_n)} +
    F_{n+1}(t^p, z, x) \ind{z_n < \Phi_p(x_n; t^p_n)}, \; \text{for
      $1 \le n \le N-1$}.
  \end{cases}
\end{align*}
Note that $F_n(t, z, x)$ does not depend on the first $n-1$ components
of $t^p$, ie $F_n(t^p, z, x)$ depends only $t^p_n, \dots,
t^p_{N-1}$. Moreover,
\begin{alignat*}{2}
  &F_n(\vt^p, Z, X) &&= Z_{\tau_n^p}, \\
  &F_n(\hvt^{p,M}, Z^{(m)}, X^{(m)}) &&= Z^{(m)}_{\htau_n^{p,(m)}}.
\end{alignat*}
Moreover, we clearly have that for all $t^p \in {\Theta_p}^{N-1}$
\begin{align}
  \label{eq:bound-F}
   \abs{F_n(t^p, Z, X)} & \le \max_{k \ge n} \abs{Z_{T_k}}.
\end{align}

\subsection{Deep neural network approximations of conditional expectations}

\begin{proposition}
  \label{prop:cv-prix-pn}
  Assume that $\E[\max_{0 \le n \le N } \abs{Z_{T_n}}^2] < \infty$. Then,
  $\lim_{p \to \infty} \E[Z_{\tau^{p}_n} | \cf_{T_n}] = \E[Z_{\tau_n}
  | \cf_{T_n}]$ in $L^2(\Omega)$ for all $1 \le n \le N$.
\end{proposition}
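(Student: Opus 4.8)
The plan is to proceed by backward induction on $n$, from $n = N$ down to $n = 1$. The base case $n = N$ is trivial since $\tau_N^p = \tau_N = T_N$ by definition, so $Z_{\tau_N^p} = Z_{\tau_N} = Z_{T_N}$ and the conditional expectations coincide. For the inductive step, I would assume the result holds at step $n+1$, i.e.\ $\E[Z_{\tau_{n+1}^p} \mid \cf_{T_{n+1}}] \to \E[Z_{\tau_{n+1}} \mid \cf_{T_{n+1}}]$ in $L^2(\Omega)$ as $p \to \infty$, and show it at step $n$.

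First I would observe that by the tower property and the definition of $\tau_{n+1}^p$, convergence of the conditional expectations at time $T_{n+1}$ actually upgrades to $L^2$ convergence of the random variables themselves: $Z_{\tau_{n+1}^p} \to Z_{\tau_{n+1}}$ in $L^2(\Omega)$. This is because $Z_{\tau_{n+1}^p} = F_{n+1}(\vt^p, Z, X)$ is built from an indicator test comparing $Z_{T_{n+1}}$ against $\Phi_p(X_{T_{n+1}}; \theta_{n+1}^p)$, and one can bound $\E[|Z_{\tau_{n+1}^p} - Z_{\tau_{n+1}}|^2]$ by a sum over $k > n$ of terms of the form $\E[(Z_{T_{n+1}})^2 \ind{\text{sign of }(Z_{T_{n+1}} - \Phi_p) \neq \text{sign of }(Z_{T_{n+1}} - \E[Z_{\tau_{n+2}}\mid\cf_{T_{n+1}}])}]$ plus the propagated error from step $n+2$; the discrepancy set has probability tending to $0$ once we know $\Phi_p(X_{T_{n+1}}; \theta_{n+1}^p) \to \E[Z_{\tau_{n+1}^p}\mid\cf_{T_{n+1}}] \to \E[Z_{\tau_{n+1}}\mid\cf_{T_{n+1}}]$ in $L^2$ (this last convergence being exactly what Remark~\ref{rem:dnn-cv} gives for the optimization problem~\eqref{eq:theta_p}, combined with the induction hypothesis). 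The one delicate point, exactly as in~\cite{clp02}, is that the boundary event $\{Z_{T_{n+1}} = \E[Z_{\tau_{n+1}}\mid\cf_{T_{n+1}}]\}$ must have zero probability, or else one must argue that the contribution of a shrinking neighbourhood of the boundary is negligible; I would either invoke such a continuity-of-distribution assumption or handle it by a uniform-integrability argument exploiting $\E[\max_n |Z_{T_n}|^2] < \infty$.

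Next, with $Z_{\tau_{n+1}^p} \to Z_{\tau_{n+1}}$ in $L^2(\Omega)$ in hand, I would analyse $\E[Z_{\tau_n^p}\mid\cf_{T_n}]$. Since $\tau_n^p$ is obtained from $\tau_{n+1}^p$ by the same indicator rule, $\E[Z_{\tau_n^p}\mid\cf_{T_n}] = Z_{T_n}\ind{Z_{T_n} \ge \Phi_p(X_{T_n};\theta_n^p)} + \E[Z_{\tau_{n+1}^p}\mid\cf_{T_n}]\ind{Z_{T_n} < \Phi_p(X_{T_n};\theta_n^p)}$. Now $\E[Z_{\tau_{n+1}^p}\mid\cf_{T_n}] \to \E[Z_{\tau_{n+1}}\mid\cf_{T_n}]$ in $L^2$ by the contraction property of conditional expectation applied to the $L^2$ convergence just established, and $\Phi_p(X_{T_n};\theta_n^p) \to \E[Z_{\tau_{n+1}}\mid\cf_{T_n}]$ in $L^2$ — the latter because $\theta_n^p$ minimizes~\eqref{eq:theta_p} whose target $\E[Z_{\tau_{n+1}^p}\mid\cf_{T_n}]$ is itself converging to $\E[Z_{\tau_{n+1}}\mid\cf_{T_n}]$, so Remark~\ref{rem:dnn-cv} (the universal approximation property together with the $L^2$-stability of the argmin in the target) yields the claim. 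Then the indicator $\ind{Z_{T_n} \ge \Phi_p(X_{T_n};\theta_n^p)}$ converges in probability to $\ind{Z_{T_n} \ge \E[Z_{\tau_{n+1}}\mid\cf_{T_n}]}$ off the boundary event, and passing to the limit in the expression above — again with a uniform-integrability domination by $\max_k |Z_{T_k}|$ to secure $L^2$ rather than merely in-probability convergence — gives $\E[Z_{\tau_n^p}\mid\cf_{T_n}] \to Z_{T_n}\ind{Z_{T_n}\ge\E[Z_{\tau_{n+1}}\mid\cf_{T_n}]} + \E[Z_{\tau_{n+1}}\mid\cf_{T_n}]\ind{Z_{T_n}<\E[Z_{\tau_{n+1}}\mid\cf_{T_n}]} = \E[Z_{\tau_n}\mid\cf_{T_n}]$, which closes the induction.

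The main obstacle I anticipate is the interplay between two moving targets: the DNN approximation $\Phi_p$ is required to converge not to a fixed conditional expectation but to $\E[Z_{\tau_{n+1}^p}\mid\cf_{T_n}]$, which itself depends on $p$. Remark~\ref{rem:dnn-cv} as stated handles a fixed random variable $Y$, so I would need to phrase the argument carefully — e.g.\ write $\|\Phi_p(X_{T_n};\theta_n^p) - \E[Z_{\tau_{n+1}}\mid\cf_{T_n}]\| \le \|\Phi_p(X_{T_n};\theta_n^p) - \E[Z_{\tau_{n+1}^p}\mid\cf_{T_n}]\| + \|\E[Z_{\tau_{n+1}^p}\mid\cf_{T_n}] - \E[Z_{\tau_{n+1}}\mid\cf_{T_n}]\|$ and control the first term using that the optimal value in~\eqref{eq:theta_p} is bounded above by the optimal value of the analogous problem with target $\E[Z_{\tau_{n+1}}\mid\cf_{T_n}]$ plus $\|Z_{\tau_{n+1}^p} - Z_{\tau_{n+1}}\|$, which vanishes. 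The second recurring technicality is the boundary event near $\{Z_{T_n} = \E[Z_{\tau_{n+1}}\mid\cf_{T_n}]\}$, which is exactly the point where the original~\cite{clp02} analysis needs care, and which I would treat by the same device they use.
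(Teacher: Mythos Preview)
Your induction scheme and your handling of the ``moving target'' (bounding the distance from $\Phi_p(X_{T_n};\theta_n^p)$ to $\E[Z_{\tau_{n+1}}\mid\cf_{T_n}]$ via the optimality of $\theta_n^p$ compared against an auxiliary minimiser $\tilde\theta_n^p$ for the \emph{fixed} target $\E[Z_{\tau_{n+1}}\mid\cf_{T_n}]$) match the paper exactly. The gap is in how you pass from $L^2$ convergence of $\Phi_p$ to convergence of the indicator-weighted terms.

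You propose to show $\ind{Z_{T_n} \ge \Phi_p(X_{T_n};\theta_n^p)} \to \ind{Z_{T_n} \ge C}$ in probability, where $C := \E[Z_{\tau_{n+1}}\mid\cf_{T_n}]$, and then upgrade via uniform integrability. This forces the assumption $\P(Z_{T_n} = C) = 0$: on $\{Z_{T_n} = C\}$ the indicator difference equals $0$ or $-1$ according to the sign of $\Phi_p - C$ and need not converge at all, and uniform integrability cannot rescue a sequence that fails to converge in probability. The proposition as stated carries no such hypothesis, and the paper's proof does not need one. The device you are missing is purely algebraic. After rewriting the difference of conditional expectations so that the indicator term carries the factor $(Z_{T_n} - C)$ rather than $Z_{T_n}$ alone, one observes that whenever $\ind{Z_{T_n} \ge \Phi_p} \ne \ind{Z_{T_n} \ge C}$ the value $Z_{T_n}$ lies between $\Phi_p$ and $C$, hence $|Z_{T_n} - C| \le |\Phi_p - C|$. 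This yields the pointwise bound
\[
  \bigl|(Z_{T_n} - C)\bigl(\ind{Z_{T_n} \ge \Phi_p(X_{T_n};\theta_n^p)} - \ind{Z_{T_n} \ge C}\bigr)\bigr| \;\le\; \bigl|\Phi_p(X_{T_n};\theta_n^p) - C\bigr|,
\]
whose $L^2$ norm you have already controlled. No boundary assumption enters because the factor $(Z_{T_n} - C)$ vanishes precisely on the troublesome set.

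A related remark: your detour through the stronger claim $Z_{\tau_{n+1}^p} \to Z_{\tau_{n+1}}$ in $L^2$ is both unnecessary (tower plus contraction already give $\E[Z_{\tau_{n+1}^p}\mid\cf_{T_n}] \to \E[Z_{\tau_{n+1}}\mid\cf_{T_n}]$ from the induction hypothesis at $T_{n+1}$) and genuinely harder. For the conditional expectations the saving factor $(Z_{T_{n+1}} - \E[Z_{\tau_{n+2}}\mid\cf_{T_{n+1}}])$ kills the boundary contribution; for the random variables themselves the corresponding factor is $(Z_{T_{n+1}} - Z_{\tau_{n+2}})$, which does \emph{not} vanish on $\{Z_{T_{n+1}} = \E[Z_{\tau_{n+2}}\mid\cf_{T_{n+1}}]\}$, so that stronger convergence really does require the zero-boundary hypothesis. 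The paper stays at the level of conditional expectations throughout and thereby sidesteps the issue entirely.
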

\begin{remark}
  Note that in the proof of Proposition~\ref{prop:cv-prix-pn}, there is no need
  for the sets $\Theta_p$ to be compact for every $p$. We could have chosen
  $\gamma_p = \infty$. However, the boundedness assumption will be required in
  the following section, so to work with the same approximations over the whole
  paper, we have decided to impose compactness on $\Theta_p$ for every $p$.
\end{remark}
\begin{proof}q
  We proceed by induction. The result is true for $n=N$ as
  $\tau_N = \tau^p_N = T$. Assume it holds for $n+1$
  (with $0 \le n \le N-1$), we will prove it is true for $n$. For this, using both recursion equations, we have
  \begin{align*}
    \E[Z_{\tau^p_n} - Z_{\tau_n} | \cf_{T_n}]
    & = Z_{T_n} \left(\ind{Z_{T_n} \ge \Phi_p(X_{T_n}; \theta^p_n)} - \ind{Z_{T_n} \ge \E[Z_{\tau_{n+1}}|\cf_{T_n}]}\right) \\
    & \qquad + \E\left[Z_{\tau_{n+1}^{p}} \ind{Z_{T_n} < \Phi_p(X_{T_n}; \theta^p_n)}   - Z_{\tau_{n+1}} \ind{Z_{T_n} < \E[Z_{\tau_{n+1}}|\cf_{T_n}]}| \cf_{T_n}\right].
  \end{align*}
  Now, defining $A_n^p$ as
  \[
    A_n^p = (Z_{T_n} - \E[Z_{\tau_{n+1}}|\cf_{T_n}])
    \left(\ind{Z_{T_n} \ge \Phi_p(X_{T_n}; \theta^p_n)} - \ind{Z_{T_n} \ge
        \E[Z_{\tau_{n+1}}|\cf_{T_n}]}\right),
  \]  
  we obtain
  \begin{equation}\label{eq:anp}
    \E[Z_{\tau^p_n} - Z_{\tau_n} | \cf_{T_n}] = A_n^p + \E\left[Z_{\tau_{n+1}^p} - Z_{\tau_{n+1}} | \cf_{T_n}\right] \ind{Z_{T_n} < \Phi_p(X_{T_n}; \theta^p_n)}.
  \end{equation}
  By the induction assumption, the term
  $\E\left[Z_{\tau_{n+1}^{p}} - Z_{\tau_{n+1}} | \cf_{T_n}\right]$
  goes to zero in $L^2(\Omega)$ as $p$ goes to infinity. So, we just
  have to prove that $A_n^p$ converges to zero in $L^2(\Omega)$ when $p \to \infty$. For this, note that
  \begin{align*}
    \abs{A_n^p} &\le \abs{Z_{T_n} - \E[Z_{\tau_{n+1}}|\cf_{T_n}]} \abs{\ind{Z_{T_n} \ge \Phi_p(X_{T_n}; \theta^p_n)} - \ind{Z_{T_n} \ge \E[Z_{\tau_{n+1}}|\cf_{T_n}]}}  \notag\\
    & \le \abs{Z_{T_n} - \E[Z_{\tau_{n+1}}|\cf_{T_n}]} \abs{ \ind{\E[Z_{\tau_{n+1}}|\cf_{T_n}] >Z_{T_n} \ge \Phi_p(X_{T_n}; \theta^p_n)} - \ind{\Phi_p(X_{T_n}; \theta^p_n) > Z_{T_n} \ge \E[Z_{\tau_{n+1}}|\cf_{T_n}]}} \notag\\
    & \le \abs{Z_{T_n} - \E[Z_{\tau_{n+1}}|\cf_{T_n}]} \ind{\abs{Z_{T_n} - \E[Z_{\tau_{n+1}}|\cf_{T_n}]} \le  \abs{\Phi_p(X_{T_n}; \theta^p_n) - \E[Z_{\tau_{n+1}}|\cf_{T_n}]}} \notag\\
                & \le \abs{\Phi_p(X_{T_n}; \theta^p_n) - \E[Z_{\tau_{n+1}}|\cf_{T_n}]}. \notag\\
  \end{align*}
  So we obtain
  \begin{equation}\label{eq:An}
    \abs{A_n^p} \le \abs{\Phi_p(X_{T_n}; \theta^p_n) - \E[Z_{\tau_{n+1}^p}|\cf_{T_n}]} +
      \abs{\E[Z_{\tau_{n+1}^p}|\cf_{T_n}] - \E[Z_{\tau_{n+1}}|\cf_{T_n}]}.
  \end{equation}
  Morevoer, as the conditional expectation is an orthogonal projection, we
  clearly have that
  \begin{align}
    \label{eq:orth-proj}
    \E\left[\abs{\E[Z_{\tau_{n+1}^{p}}|\cf_{T_n}] - \E[Z_{\tau_{n+1}}|\cf_{T_n}]}^2\right] &\le \E\left[\abs{\E[Z_{\tau_{n+1}^{p}}|\cf_{T_{n+1}}] - \E[Z_{\tau_{n+1}}|\cf_{T_{n+1}}]}^2\right].
  \end{align}
  Then, the induction assumption for $n+1$ yields that the second term on the r.h.s of~\eqref{eq:An} goes to zero in $L^2(\Omega)$ when $p \to \infty$.

  To deal with the first term on the r.h.s of~\eqref{eq:An}, we
  introduce for any $p \in \N$, $\tilde \theta_n^{p} \in \Theta_p$
  defined as a minimiser to
  \begin{align}
    \label{eq:theta_p-q--cond}
    \inf_{\theta \in \Theta_p} \E\left[ \abs{\Phi_p(X_{T_n}; \theta) - \E\left[Z_{\tau_{n+1}} | \cf_{T_n} \right]}^2 \right].
  \end{align}
  Note that $\Phi(\cdot, \tilde \theta_n^p)$ is the best approximation on
  $\nn_p$ of the true continuation value at time $n$. As $\theta^p_n$
  solves~\eqref{eq:theta_p-cond}, we clearly have that
  \begin{align}
    \label{eq:bound-pq}
    &\E\left[\abs{\Phi_p(X_{T_n}; \theta^p_n) - \E[Z_{\tau_{n+1}^p}|\cf_{T_n}]}^2\right] \nonumber\\
    & \le \E\left[\abs{\Phi_p(X_{T_n}; \tilde \theta^p_n) - \E[Z_{\tau_{n+1}^p}|\cf_{T_n}]}^2\right] \nonumber\\
    & \le 2 \E\left[\abs{\Phi_p(X_{T_n}; \tilde \theta^p_n) - \E[Z_{\tau_{n+1}}|\cf_{T_n}]}^2\right]  +
    2 \E\left[ \abs{\E[Z_{\tau_{n+1}}|\cf_{T_n}] - \E[Z_{\tau_{n+1}^p}|\cf_{T_n}]}^2\right]
  \end{align}
  Using the induction assumption for $n+1$, the second term on the
  r.h.s of~\eqref{eq:bound-pq} goes to zero in $L^2(\Omega)$ and
  from the universal approximation theorem (see
  Theorem~\ref{thm:dnn-cv} and Remark~\eqref{rem:dnn-cv}), we deduce
  that
  \begin{align*}
    \lim_{p \to \infty} \E\left[\abs{\Phi_p(X_{T_n}; \tilde \theta^p_n) - \E[Z_{\tau_{n+1}}|\cf_{T_n}]}^2\right] = 0.
  \end{align*}
  Then, we conclude that $\lim_{p \to \infty} \E[\abs{A_n^p}^2] = 0$.
\end{proof}
The next proposition show that if we have an estimate of the speed of
convergence for the network approximation for a suitable class of
functions we are able to derive the speed of convergence for the Bermudan option price.
\begin{proposition}
  Assume that for every $0 \le n \le N - 1$, there exists a sequence $(\delta_n^p)_p$ of positive real numbers such that
  \begin{align}
    \label{eq:NN-functional-rate}
    \inf_{\theta \in \Theta_p} \E\left[ \abs{\Phi_p(X_{T_n}; \theta) - \E\left[Z_{\tau_{n+1}} | \cf_{T_n} \right]}^2 \right] = O(\delta_n^p) \quad \mbox{when } p \to \infty.
  \end{align}
  Then, 
  \begin{equation*}
    \E\left[ \abs{\E[Z_{\tau^{p}_n} | \cf_{T_n}] - \E[Z_{\tau_n} | \cf_{T_n}]}^2 \right] = O\left(\sum_{i = n}^{N-1} \delta_i^p\right) \quad \mbox{when } p \to \infty.
  \end{equation*}
  
\end{proposition}
\begin{proof}
  We use the same notation as in the proof of Proposition~\ref{prop:cv-prix-pn}. We proceed by backward induction.

  First note that $\tau_N^p = \tau_N = T$ and then, using~\eqref{eq:anp},  $A_{N-1}^p=\E[Z_{\tau_{N-1}}^p - Z_{\tau_{N-1}} | \cf_{T_{N-1}}]$. Moreover, from~\eqref{eq:An}, we get that
  \begin{align*}
    \abs{A_{N-1}^p} \le \abs{\Phi_p(X_{T_{N-1}}; \theta^p_n) - \E[Z_{\tau_N^p}|\cf_{T_{N-1}}]}.
  \end{align*}
  Using again that $\tau_N^p = \tau_N$, we deduce that
  \begin{align*}
    \E\left[ \abs{\E[Z_{\tau^{p}_{N-1}} | \cf_{T_{N-1}}] - \E[Z_{\tau_{N-1}} | \cf_{T_{N-1}}]}^2 \right] \le \inf_{\theta \in \Theta_p} \E\left[ \abs{\Phi_p(X_{T_{N-1}}; \theta) - \E\left[Z_{\tau_N} | \cf_{T_{N-1}} \right]}^2 \right].
  \end{align*}
  Therefore, 
  \begin{align*}
    \E\left[ \abs{\E[Z_{\tau^{p}_{N-1}} | \cf_{T_{N-1}}] - \E[Z_{\tau_{N-1}} | \cf_{T_{N-1}}]}^2 \right] = O(\delta_{N-1}^p) \quad \text{when} \quad p \to \infty.
  \end{align*}
  Assume the result holds true for $n + 1$ (with $0 \le n \le N-2)$, we will prove it is true for $n$.
  \begin{align*}
    \E\left[\abs{\E[Z_{\tau^p_n} - Z_{\tau_n} | \cf_{T_n}] }^2\right] & \le 2 \E\left[\abs{\E\left[Z_{\tau_{n+1}^p} - Z_{\tau_{n+1}} | \cf_{T_n}\right]}^2\right] + 2 \E[\abs{A_n^p}^2] \\
   & \le 2 \E\left[\abs{\E\left[Z_{\tau_{n+1}^p} - Z_{\tau_{n+1}} | \cf_{T_{n+1}}\right]}^2\right] + 2 \E[\abs{A_n^p}^2]
  \end{align*}
  where we have used~\eqref{eq:orth-proj}. Then, using the induction assumption, we get
  \begin{align*}
    \E\left[\abs{\E[Z_{\tau^p_n} - Z_{\tau_n} | \cf_{T_n}] }^2\right] & \le O(\delta_{n+1}^p) + 2 \E[\abs{A_n^p}^2].
  \end{align*}
  From~\eqref{eq:An} and~\eqref{eq:orth-proj}, we have
  \begin{align*}
    \E[\abs{A_n^p}^2] & \le 2 \E\left[\abs{\Phi_p(X_{T_n}; \theta^p_n) - \E[Z_{\tau_{n+1}^p}|\cf_{T_n}]}^2\right] + 2 \E\left[\abs{\E[Z_{\tau_{n+1}^{p}}|\cf_{T_{n+1}}] - \E[Z_{\tau_{n+1}}|\cf_{T_{n+1}}]}^2\right]\\
    & \le 4 \inf_{\theta \in \Theta_p} \E\left[ \abs{\Phi_p(X_{T_n}; \theta) - \E\left[Z_{\tau_{n+1}} | \cf_{T_n} \right]}^2 \right] \\
    & \qquad + 6 \E\left[\abs{\E[Z_{\tau_{n+1}^{p}}|\cf_{T_{n+1}}] - \E[Z_{\tau_{n+1}}|\cf_{T_{n+1}}]}^2\right]
  \end{align*}
  where the last inequality comes from~\eqref{eq:bound-pq}.

  From the induction assumption, the second term is bounded by
  $O\left(\sum_{i=n+1}^{N-1} \delta_i^p\right)$. From~\eqref{eq:NN-functional-rate}, the first term is bounded by $O(\delta_n^p)$. Then, we conclude that when $p \to \infty$
  \[E\left[ \abs{\E[Z_{\tau^{p}_n} | \cf_{T_n}] - \E[Z_{\tau_n} |
      \cf_{T_n}]}^2 \right] = O\left(\sum_{i = n}^{N-1} \delta_i^p\right).\]
\end{proof}

\subsection{Convergence of the Monte Carlo approximation }

In the following, we assume that $p$ is fixed and we study the convergence with
respect to the number of samples $M$. First, we recall some important results on
the convergence of the solution of a sequence of optimization problems whose
cost functions converge.

\subsubsection{Convergence of optimization problems}

Consider a sequence of real valued functions $(f_n)_n$ defined on a
compact set $K \subset \R^d$. Define,
\begin{align*}
  v_n = \inf_{x \in K} f_n(x)
\end{align*}
and let $x_n$ be a sequence of minimizers
\begin{align*}
  f_n(x_n) = \inf_{x \in K} f_n(x).
\end{align*}
From~\cite[Chap.~2]{MR1241645}, we have the following result.
\begin{lemma}
  \label{lem:optim-cv}
  Assume that the sequence $(f_n)_n$ converges uniformly on $K$ to a
  continuous function $f$. Let $v^\s = \inf_{x \in K} f(x)$ and
  $\cs^\s = \{x \in K \,:\, f(x) = v^\s\}$. Then $v_n \to v^\s$ and
  $d(x_n, \cs^\s) \to 0$ a.s.
\end{lemma}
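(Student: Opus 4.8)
The plan is to treat the two conclusions separately: the convergence $v_n \to v^\s$ is an immediate consequence of uniform convergence, while $d(x_n,\cs^\s)\to 0$ will follow from a compactness-plus-contradiction argument. First, $f$ being continuous on the compact set $K$, the infimum $v^\s$ is attained, so $\cs^\s$ is nonempty; it is also closed, which makes $x\mapsto d(x,\cs^\s)$ well defined, continuous, and equal to $0$ exactly on $\cs^\s$.

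For the values, I would use that the functional $g \mapsto \inf_K g$ is $1$-Lipschitz for the supremum norm, i.e. $\abs{\inf_K g - \inf_K h}\le \sup_{x\in K}\abs{g(x)-h(x)}$ for any two bounded functions $g,h$ on $K$. Applying this with $g=f_n$, $h=f$ and using the uniform convergence $\sup_{x\in K}\abs{f_n(x)-f(x)}\to 0$ gives $\abs{v_n-v^\s}\le \sup_{x\in K}\abs{f_n(x)-f(x)}\to 0$, hence $v_n\to v^\s$.

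For the minimizers, I would argue by contradiction. Suppose $d(x_n,\cs^\s)\not\to 0$; then there exist $\varepsilon>0$ and a subsequence $(x_{n_k})_k$ with $d(x_{n_k},\cs^\s)\ge\varepsilon$ for all $k$. By sequential compactness of $K$, extract a further subsequence with $x_{n_k}\to\bar x$ for some $\bar x\in K$; passing to the limit in the distance inequality and using continuity of $d(\cdot,\cs^\s)$ gives $d(\bar x,\cs^\s)\ge\varepsilon>0$, so $\bar x\notin\cs^\s$ and therefore $f(\bar x)>v^\s$. On the other hand, since $f_{n_k}(x_{n_k})=v_{n_k}$,
\begin{align*}
  \abs{f(x_{n_k})-v^\s} \le \abs{f(x_{n_k})-f_{n_k}(x_{n_k})} + \abs{v_{n_k}-v^\s} \le \sup_{x\in K}\abs{f_{n_k}(x)-f(x)} + \abs{v_{n_k}-v^\s} \longrightarrow 0,
\end{align*}
using Step~1 for the last bracket; the continuity of $f$ gives $f(x_{n_k})\to f(\bar x)$, so $f(\bar x)=v^\s$, contradicting $f(\bar x)>v^\s$. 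Hence $d(x_n,\cs^\s)\to 0$.

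I do not expect a genuine obstacle: the only points needing care are that $\cs^\s$ is nonempty and closed — which is what lets ``$d(\bar x,\cs^\s)\ge\varepsilon$'' force ``$f(\bar x)>v^\s$'' — and the use of sequential compactness of $K$ to produce the limit point $\bar x$. Concerning the ``a.s.'' in the statement: in the intended application the $f_n$ are the random sample-average functionals appearing in~\eqref{theta_pM}, whose uniform convergence on $K$ holds almost surely, so the conclusion is obtained by applying the deterministic argument above pathwise on that full-measure event.
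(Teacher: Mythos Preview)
Your proof is correct and is the standard compactness-plus-uniform-convergence argument for this classical result. The paper itself does not give a proof of this lemma: it simply states the result and attributes it to \cite[Chap.~2]{MR1241645}, so there is no ``paper's own proof'' to compare against. Your write-up is essentially what one finds in that reference, and your closing remark correctly explains how the ``a.s.'' qualifier arises in the intended application via Lemma~\ref{lem:ulln}.
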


In the following, we will also make heavy use of the following result,
which is a restatement of the law of large numbers in Banach spaces,
see~\cite[Corollary 7.10, page 189]{ledouxtalagrand} or~\cite[Lemma
A1]{MR1241645}.
\begin{lemma}\label{lem:ulln}
  Let $(\xi_i)_{i\geq 1}$ be a sequence of i.i.d. $\R^m$-valued random
  vectors and $h:\R^d\times\R^m\rightarrow \R$ be a measurable
  function. Assume that
  \begin{itemize}
    \item a.s., $\theta\in\R^d\mapsto h(\theta,\xi_1)$ is continuous,
    \item  $\forall C>0,\;\E\left[\sup_{|\theta|\leq C}|h(\theta,\xi_1)|\right]<+\infty$.
  \end{itemize}
  Then, a.s.
  $\theta\in\R^d\mapsto\frac{1}{n}\sum_{i=1}^n h(\theta,\xi_i)$
  converges locally uniformly to the continuous function
  $\theta\in\R^d\mapsto\E[h(\theta,\xi_1)]$, ie
  \[ \lim_{n \to \infty} \sup_{|\theta|\leq C}
    \abs{\frac{1}{n}\sum_{i=1}^n h(\theta,\xi_i) - \E[h(\theta,
      \xi_1)]} = 0 \, a.s.
  \]
\end{lemma}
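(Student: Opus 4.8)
The plan is to recognize the maps $\theta \mapsto h(\theta,\xi_i)$ as i.i.d. random elements of a separable Banach space and to apply the strong law of large numbers there. Fix $C>0$ and let $K=\{\theta\in\R^d : |\theta|\le C\}$, a compact set; it suffices to prove uniform convergence on $K$, since the locally uniform statement follows by taking a countable intersection over $C\in\N$ of almost sure events. For each $i$, let $g_i$ denote the restriction to $K$ of the function $\theta\mapsto h(\theta,\xi_i)$, viewed as an element of $C(K)$, the space of continuous real functions on $K$ with the supremum norm. The first assumption gives $g_i\in C(K)$ almost surely, the second gives $\E[\|g_i\|_{C(K)}]=\E[\sup_{|\theta|\le C}|h(\theta,\xi_1)|]<\infty$, and the $g_i$ are i.i.d.\ because each is the same measurable function of $\xi_i$. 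Hence, once the measurability point below is settled, the $g_i$ are Bochner integrable i.i.d.\ random elements of $C(K)$.

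Granting that, I would invoke the strong law of large numbers in separable Banach spaces (Mourier's theorem; see \cite[Corollary~7.10]{ledouxtalagrand}): almost surely $\frac1n\sum_{i=1}^n g_i\to\E[g_1]$ in $C(K)$, which is exactly uniform convergence on $K$. It then remains to identify the limit with $\theta\mapsto\E[h(\theta,\xi_1)]$: for each fixed $\theta$ the evaluation map $e_\theta:f\mapsto f(\theta)$ is a bounded linear functional on $C(K)$, so it commutes with the Bochner integral, $(\E[g_1])(\theta)=e_\theta(\E[g_1])=\E[e_\theta(g_1)]=\E[h(\theta,\xi_1)]$. In particular $\theta\mapsto\E[h(\theta,\xi_1)]$ is an element of $C(K)$, hence continuous; this also follows directly from dominated convergence with the integrable envelope $\sup_{|\theta|\le C}|h(\theta,\xi_1)|$.

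The hard part will be the measurability step: one must check that $\omega\mapsto g_i(\omega)$ is a strongly measurable $C(K)$-valued map, so that the notions of i.i.d.\ random element and Bochner integral are legitimate. I would handle this with the Pettis measurability theorem: $C(K)$ is separable, so strong measurability is equivalent to weak measurability, and for each $\mu\in C(K)^*$ (a signed Borel measure on $K$) the map $\omega\mapsto\int_K h(\theta,\xi_i(\omega))\,\mu(d\theta)$ is measurable, because $h$ is jointly measurable and $\theta\mapsto h(\theta,\xi_i(\omega))$ is a.s.\ continuous, so the integral is an a.s.\ limit of Riemann-type sums over a fixed countable dense subset of $K$. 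An alternative route avoids Banach-space machinery entirely: for $\theta_0\in K$ and $\delta>0$ set $w(\theta_0,\delta)=\sup_{|\theta-\theta_0|<\delta}|h(\theta,\xi_1)-h(\theta_0,\xi_1)|$, which is a measurable random variable (the supremum may be taken over a countable dense set) and satisfies $\E[w(\theta_0,\delta)]\to0$ as $\delta\downarrow0$ by continuity and dominated convergence; one covers the compact $K$ by finitely many balls $B(\theta_j,\delta_j)$ with $\E[w(\theta_j,\delta_j)]$ small, applies the ordinary scalar strong law of large numbers simultaneously to the finitely many $h(\theta_j,\cdot)$ and $w(\theta_j,\delta_j)$, and glues the estimates — the classical argument underlying uniform laws of large numbers.
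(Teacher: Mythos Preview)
Your proposal is correct and follows exactly the route the paper indicates: the paper does not give an independent proof of this lemma but simply cites it as a restatement of the strong law of large numbers in separable Banach spaces, pointing to \cite[Corollary~7.10]{ledouxtalagrand} and \cite[Lemma~A1]{MR1241645}. Your argument is precisely this application to i.i.d.\ $C(K)$-valued random elements, with the measurability verification and the identification of the Bochner mean spelled out---details the paper leaves to the cited references.
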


\subsubsection{Strong law of large numbers}
To prove a strong law of large numbers, we will need the following assumptions.
\begin{hypo}
  \label{hyp:NN-smooth}
  For every $p \in \N$, $p>1$, there exist $q \ge 1$ and $\kappa_p > 0$ s.t.
  \begin{equation*}
    \forall \, x\in\R^r, \; \forall \, \theta \in \Theta_p, \quad \abs{\Phi_p(x, \theta)} \le \kappa_p (1 + \abs{x}^q).
  \end{equation*}
  Moreover, for all $1 \le n \le N-1$, a.s. the random functions
  $\theta \in \Theta_p \longmapsto \Phi_p(X_{T_n}, \theta)$ are
  continuous. Note that as $\Theta_p$ is a compact set, the continuity
  automatically yields the uniform continuity.
\end{hypo}
\begin{hypo}
  \label{hyp:int}
  For $q$ defined in~\ref{hyp:NN-smooth}, $\E[\abs{X_{T_n}}^{2q}] < \infty$ for all $0 \le n \le N$.
\end{hypo}
\begin{hypo}
  \label{hyp:P-nul}
  For all $p \in \N$, $p>1$ and all $1 \le n \le N-1$,
  $\P\left( Z_{T_n} = \Phi_p(X_{T_n}; \t^{p}_n) \right) = 0$.
\end{hypo}
We introduce the notation
\begin{align}\label{eq:minp}
  \cs^{p}_n = \arg\inf_{\theta \in \Theta_p} \E\left[ \abs{\Phi_p(X_{T_n}; \theta) - Z_{\tau_{n+1}^p}}^2 \right].
\end{align}
Note that $\cs^{p}_n$ is a non void compact set.
\begin{hypo}
  \label{hyp:uniqueNN}
  For every $p \in \N$, $p>1$ and every $1 \le n \le N$, for all $\theta^1, \theta^2 \in \cs^p_n$,
  \begin{equation*}
    \Phi_p(x ; \theta^1)  = \Phi_p(x ; \theta^2) \quad \mbox{for all } x \in \R^r
  \end{equation*}
\end{hypo}
\begin{remark}
  Assumption~\ref{hyp:NN-smooth} is clearly satisfied for the
  classical activation functions ReLU $\sigma_{a}(x) = (x)_+$, sigmoid
  $\sigma_{a}(x) = (1 + \expp{-x})^{-1}$ and $\sigma_{a}(x) = \tanh(x)$. When
  the law of $X_{T_n}$ has a density with respect to the Lebesgue
  measure, the continuity assumption stated in~\ref{hyp:NN-smooth} is
  even satisfied by the binary step activation function
  $\sigma_{a}(x) = \ind{x \ge 0}$.
\end{remark}
\begin{remark}
  Considering the natural symmetries existing in a neural network, it
  is clear that the set $\cs^p_n$ will hardly ever be reduced to a
  singleton. So, none of the parameters $\htheta^{p,M}_n$ or
  $\theta^p_n$ is unique. Here, we only require the function described
  by the neural network approximation to be unique but not its
  representation, which is much weaker and more realistic in
  practice. We refer to~\cite{albertini93,albertini94} for
  characterization of symmetries of neural networks and to~\cite{WH95}
  for results on existence and uniqueness of an optimal neural network
  approximation (but not its parameters).
\end{remark}

To start, we prove the convergence of the neural network approximation
of the conditional expectation at each time step.
\begin{proposition}
  \label{prop:PhithetaM}
  Assume that Assumptions~\ref{hyp:NN-smooth}-\ref{hyp:uniqueNN}
  hold. Let $\theta^p_n$ be a minimiser of optimization problem~(\ref{eq:minp})
  and $\htheta^{p,M}_n$ be a minimiser of  the sample average optimization problem~(\ref{eq:theta_pM}),
  then, for every $n=1,\dots,N$,
  $\Phi_p(X_{T_n}^{(1)};\htheta^{p,M}_n)$ converges to
  $\Phi_p(X_{T_n}^{(1)}; \theta^p_n)$ a.s. as $M \to \infty$.
\end{proposition}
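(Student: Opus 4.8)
The plan is to prove the statement by a backward induction on $n$, running from $n=N$ down to $n=1$, and at each step to combine Lemma~\ref{lem:ulln} (the uniform law of large numbers) with Lemma~\ref{lem:optim-cv} (stability of argmin under uniform convergence). The base case $n=N$ is trivial since $\tau_N^p = \htau_N^{p,(m)} = T$ and there is no optimization to perform. For the inductive step, assume the result holds at time $n+1$, i.e. $\Phi_p(X_{T_{n+1}}^{(1)};\htheta^{p,M}_{n+1}) \to \Phi_p(X_{T_{n+1}}^{(1)};\theta^p_{n+1})$ a.s.; I want to show it at time $n$.

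The first key step is to pass from the convergence of the approximating functions at times $n+1,\dots,N-1$ to the convergence of the cashflows $Z^{(1)}_{\htau_{n+1}^{p,(1)}} = F_{n+1}(\hvt^{p,M}, Z^{(1)}, X^{(1)})$ towards $Z^{(1)}_{\tau_{n+1}^{p,(1)}} = F_{n+1}(\vt^p, Z^{(1)}, X^{(1)})$. This uses the explicit form of the vector field $F$: the only place the parameters enter $F_{n+1}$ is through the indicators $\ind{Z_{T_k} \ge \Phi_p(X_{T_k}; \cdot)}$ for $k \ge n+1$, and Assumption~\ref{hyp:P-nul} guarantees that a.s. $Z_{T_k} \ne \Phi_p(X_{T_k};\theta^p_k)$, so the indicator is a.s. continuous at the limiting argument; combined with $\htheta^{p,M}_k \to$ (the relevant limit) and the induction hypothesis, one gets a.s. convergence of $F_{n+1}$, hence of the cashflow, and by dominated convergence (using \eqref{eq:bound-F} and the square-integrability of $\max_k |Z_{T_k}|$) also $L^2$ and $L^1$ convergence. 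The bound in Assumption~\ref{hyp:NN-smooth} together with Assumption~\ref{hyp:int} is what makes the integrability uniform in the parameters.

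The second key step is to apply Lemma~\ref{lem:ulln} to the empirical cost function
\[
  \theta \in \Theta_p \longmapsto \frac{1}{M}\sum_{m=1}^M \abs{\Phi_p(X_{T_n}^{(m)};\theta) - Z^{(m)}_{\htau_{n+1}^{p,(m)}}}^2,
\]
which is delicate because the summands are not i.i.d.\ in $\theta$ once $\htheta^{p,M}_{n+1}$ is plugged into the cashflow. The trick, following~\cite{clp02}, is to first replace $Z^{(m)}_{\htau_{n+1}^{p,(m)}}$ by the ``frozen'' cashflow $F_{n+1}(t^p, Z^{(m)}, X^{(m)})$ for a fixed deterministic $t^p$, apply Lemma~\ref{lem:ulln} to the genuinely i.i.d.\ family $\theta \mapsto \abs{\Phi_p(X_{T_n}^{(m)};\theta) - F_{n+1}(t^p,Z^{(m)},X^{(m)})}^2$ (its continuity in $\theta$ comes from Assumption~\ref{hyp:NN-smooth}, its integrability from Assumptions~\ref{hyp:NN-smooth}--\ref{hyp:int}), and then control the difference coming from $\hvt^{p,M}$ versus $t^p$ uniformly in $\theta$ using Step~1. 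One thus obtains that the empirical cost converges a.s., uniformly on $\Theta_p$, to the deterministic cost $\theta \mapsto \E[\abs{\Phi_p(X_{T_n};\theta) - Z_{\tau_{n+1}^p}}^2]$, which is continuous. Lemma~\ref{lem:optim-cv} then gives $d(\htheta^{p,M}_n, \cs^p_n) \to 0$ a.s., and Assumption~\ref{hyp:uniqueNN} (uniqueness of the represented function on $\cs^p_n$) together with the continuity of $\theta \mapsto \Phi_p(X_{T_n}^{(1)};\theta)$ upgrades this to $\Phi_p(X_{T_n}^{(1)};\htheta^{p,M}_n) \to \Phi_p(X_{T_n}^{(1)};\theta^p_n)$ a.s., closing the induction.

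The main obstacle I anticipate is the non-independence issue in Step~2: writing down carefully the decomposition that isolates the frozen cashflow, and checking that the error term $\frac{1}{M}\sum_m \bigl(\abs{\Phi_p(X_{T_n}^{(m)};\theta) - Z^{(m)}_{\htau_{n+1}^{p,(m)}}}^2 - \abs{\Phi_p(X_{T_n}^{(m)};\theta) - F_{n+1}(\vt^p,Z^{(m)},X^{(m)})}^2\bigr)$ goes to $0$ a.s.\ uniformly in $\theta \in \Theta_p$. This requires factoring the difference of squares, bounding $\abs{\Phi_p(X_{T_n}^{(m)};\theta)}$ uniformly in $\theta$ by $\kappa_p(1+\abs{X_{T_n}^{(m)}}^q)$, and controlling $\frac{1}{M}\sum_m \abs{Z^{(m)}_{\htau_{n+1}^{p,(m)}} - Z^{(m)}_{\tau_{n+1}^{p,(m)}}} \cdot (\text{something integrable})$; one handles the latter by yet another application of the law of large numbers plus the a.s.\ convergence of the cashflows from Step~1 and a uniform integrability / Cauchy--Schwarz argument. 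Everything else is bookkeeping on indicators and dominated convergence.
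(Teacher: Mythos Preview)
Your strategy is exactly the paper's: backward induction, freeze $\hvt^{p,M}$ at $\vt^p$ in the cashflow $F_{n+1}$ to recover an i.i.d.\ sum, apply Lemma~\ref{lem:ulln} to the frozen cost, show the frozen/unfrozen discrepancy vanishes uniformly in $\theta$, then invoke Lemma~\ref{lem:optim-cv} and Assumption~\ref{hyp:uniqueNN}. The induction hypothesis should be stated for all $i\ge n+1$ (not just $n+1$), since $F_{n+1}$ involves $\theta^p_{n+1},\dots,\theta^p_{N-1}$; this is implicit in a backward induction but worth writing.

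One point to sharpen. Your proposed Cauchy--Schwarz route reduces the error control to showing $\frac{1}{M}\sum_{m=1}^M |F_{n+1}(\hvt^{p,M},Z^{(m)},X^{(m)})-F_{n+1}(\vt^p,Z^{(m)},X^{(m)})|^2\to 0$ a.s.; but a.s.\ convergence of each summand (your Step~1, for each fixed $m$) together with an integrable envelope does \emph{not} give a.s.\ convergence of the running average, because the threshold in $M$ after which the $m$-th term is small may depend on $m$. The paper circumvents this via Lemma~\ref{lem:Flip}, which bounds the cashflow difference by indicators
\[
\ind{\abs{Z_{T_i}^{(m)}-\Phi_p(X_{T_i}^{(m)};\theta^p_i)}\le \abs{\Phi_p(X_{T_i}^{(m)};\htheta^{p,M}_i)-\Phi_p(X_{T_i}^{(m)};\theta^p_i)}}.
\]
The crucial feature is that the only $M$-dependence is through $\htheta^{p,M}_i$, which is \emph{common to all} $m$; since $d(\htheta^{p,M}_i,\cs^p_i)\to 0$ a.s., for any $\varepsilon>0$ one may dominate (for $M$ large) by the i.i.d.\ indicators $\ind{|Z_{T_i}^{(m)}-\Phi_p(X_{T_i}^{(m)};\theta^p_i)|\le\varepsilon}$, apply the SLLN, and send $\varepsilon\downarrow 0$ using~\ref{hyp:P-nul}. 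This is precisely the ``bookkeeping on indicators'' you allude to at the end --- it, rather than Cauchy--Schwarz, is the load-bearing mechanism here.
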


\begin{lemma}
  \label{lem:Flip}
  For every $n=1,\dots, N-1$,
  \[
    \abs{F_n(a, Z, X) - F_n(b, Z, X)} \le \left(\sum_{i=n}^N
      \abs{Z_{T_i}}\right) \left(\sum_{i=n}^{N-1} \ind{\abs{Z_{T_i} -
          \Phi_p(X_{T_i}; b_i)} \le \abs{\Phi_p(X_{T_i}; a_i)
          -\Phi_p(X_{T_i}; b_i)} }\right)
  \]
\end{lemma}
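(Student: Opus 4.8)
The plan is to prove the inequality by backward induction on $n$, using only the recursive definition of $F_n$ and the crude bound~\eqref{eq:bound-F}. For the base case $n=N$ one has $F_N(a,Z,X)=Z_{T_N}=F_N(b,Z,X)$, so the left-hand side vanishes; the right-hand side also vanishes since the sum $\sum_{i=N}^{N-1}$ is empty. This moreover records the identity $F_N(a,Z,X)=F_N(b,Z,X)$, which is exactly the form of the induction hypothesis needed at the first genuine step $n=N-1$.

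For the inductive step, fix $1\le n\le N-1$, assume the bound at level $n+1$, and set $u=\ind{Z_{T_n}\ge \Phi_p(X_{T_n};a_n)}$ and $v=\ind{Z_{T_n}\ge \Phi_p(X_{T_n};b_n)}$, so that $F_n(a,Z,X)=Z_{T_n}u+F_{n+1}(a,Z,X)(1-u)$ and likewise for $b$. I would split on whether $u=v$. If $u=v=1$ then both quantities equal $Z_{T_n}$ and the left-hand side is $0$. If $u=v=0$ the left-hand side is $\abs{F_{n+1}(a,Z,X)-F_{n+1}(b,Z,X)}$, and the induction hypothesis at level $n+1$ gives precisely the claimed bound once one notes that extending the first sum down to $i=n$ only adds the nonnegative term $\abs{Z_{T_n}}$ and extending the second sum down to $i=n$ only adds a nonnegative indicator.

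The crux is the case $u\ne v$. Here I would observe that a disagreement of the two threshold indicators forces $Z_{T_n}$ to lie in the closed interval with endpoints $\Phi_p(X_{T_n};a_n)$ and $\Phi_p(X_{T_n};b_n)$: if $u=1,v=0$ then $\Phi_p(X_{T_n};a_n)\le Z_{T_n}<\Phi_p(X_{T_n};b_n)$, and symmetrically if $u=0,v=1$. In both subcases $\abs{Z_{T_n}-\Phi_p(X_{T_n};b_n)}\le \abs{\Phi_p(X_{T_n};a_n)-\Phi_p(X_{T_n};b_n)}$, so the $i=n$ summand of the second factor on the right-hand side equals $1$, and hence the whole right-hand side is at least $\sum_{i=n}^N\abs{Z_{T_i}}$ by nonnegativity of all the terms. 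On the other hand, in this case one of $F_n(a,Z,X)$ and $F_n(b,Z,X)$ equals $Z_{T_n}$ while the other equals $F_{n+1}(a,Z,X)$ or $F_{n+1}(b,Z,X)$, so by the triangle inequality and~\eqref{eq:bound-F} their difference is bounded by $\abs{Z_{T_n}}+\max_{k\ge n+1}\abs{Z_{T_k}}\le \sum_{i=n}^N\abs{Z_{T_i}}$. Combining the two estimates closes the induction. The only mildly delicate point is the interval observation in this last case, which is what ties the jump of the stopping decision to the indicator appearing on the right-hand side; the rest is routine bookkeeping with the recursion and with nonnegativity of the summands.
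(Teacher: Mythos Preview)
Your proof is correct. The paper actually states Lemma~\ref{lem:Flip} without proof, so there is no argument to compare against directly; your backward induction on $n$ with the three-case split $u=v=1$, $u=v=0$, $u\ne v$ is the natural route and all steps are sound. The key ``interval'' observation you make in the case $u\ne v$---that a disagreement of the two threshold indicators forces $\abs{Z_{T_n}-\Phi_p(X_{T_n};b_n)}\le\abs{\Phi_p(X_{T_n};a_n)-\Phi_p(X_{T_n};b_n)}$---is exactly the manipulation the paper uses elsewhere (see the bound on $\abs{A_n^p}$ in the proof of Proposition~\ref{prop:cv-prix-pn}), so your argument is very much in the spirit of the paper's own techniques.
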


\begin{proof}[Proof of Proposition~\ref{prop:PhithetaM}]
  We proceed by induction.  \\
  \noindent\emph{Step 1.} For $n=N-1$, $\htheta_{N-1}^{p,M}$ solves
  \begin{align*}
    \inf_{\theta \in \Theta_p} \sum_{m=1}^M \abs{\Phi_p(X_{T_{N-1}}^{(m)}; \theta) - Z^{(m)}_{T_N}}^2.
  \end{align*}
  We aim at applying Lemma~\ref{lem:ulln} to the sequence of
  i.i.d. random functions
  $h_m(\theta) = \abs{\Phi_p(X_{T_{N-1}}^{(m)}; \theta) -
    Z^{(m)}_{T_N}}^2$. From Assumptions~\ref{hyp:NN-smooth}
  and~\ref{hyp:int}, we deduce that
  \begin{align*}
    \E\left[\sup_{\theta \in \Theta_P} \abs{h_m(\theta)}\right] \le 2 \kappa_p \E[\abs{X_{T_{N-1}}}^{2q}] + \E[(Z_T)^2] < \infty.
  \end{align*}
  Then, Lemma~\ref{lem:ulln} implies that a.s. the function
  \[
    \theta \in \Theta_p \longmapsto \inv{M} \sum_{m=1}^M
    \abs{\Phi_p(X_{T_{N-1}}^{(m)}; \theta) - Z^{(m)}_{T_N}}^2
  \]
  converges uniformly to
  $\E[\abs{\Phi_p(X_{T_{N-1}}; \theta) - Z_{T_N}}^2]$. Hence, we
  deduce from Lemma~\ref{lem:optim-cv} that
  $d(\htheta_{N-1}^{p,M}, S^p_{N-1}) \to 0$ a.s. when $M \to
  \infty$. We restrict to a subset with probability one of the original probability space on which this convergence holds and the random functions
  $\Phi(X_{T_{N-1}}^{(1)}; \cdot)$ are uniformly
  continuous, see~\ref{hyp:NN-smooth}. There exists a sequence $(\xi^{p,M}_{N-1})_M$ taking values in $S^p_{N-1}$ such that
  $\abs{\htheta_{N-1}^{p,M} - \xi^{p,M}_{N-1}} \to 0$, when
  $M \to \infty$. The uniform continuity of the random functions
  $\Phi(X_{T_{N-1}}^{(1)}; \cdot)$ yields that
  \begin{align*}
    \Phi(X_{T_{N-1}}^{(1)};\htheta^{p,M}_{N-1}) - \Phi(X_{T_{N-1}}^{(1)};\xi^{p,M}_{N-1}) \to 0
  \end{align*}
  Then, we conclude from Assumption~\ref{hyp:uniqueNN}, that $\Phi(X_{T_{N-1}}^{(1)};\htheta^{p,M}_{N-1}) \to \Phi(X_{T_{N-1}}^{(1)};\theta^{p}_{N-1})$ \\

  \noindent\emph{Step 2.} Choose $n \le N-2$ and assume that the convergence result holds for
  $n+1,\dots,N-1$, we aim at proving this is true for $n$. We recall
  that $\htheta_n^{p,M}$ solves
  \begin{align*}
    \inf_{\theta \in \Theta_p} \sum_{m=1}^M \abs{\Phi_p(X_{T_{n}}^{(m)}; \theta) - F_{n+1}(\hvt^{p,M}, Z^{(m)}, X^{(m)})}^2.
  \end{align*}
  We introduce the two random functions for $\theta \in \Theta_p$
  \begin{align*}
    \hat v^M(\theta) &= \inv{M} \sum_{m=1}^M \abs{\Phi_p(X_{T_{n}}^{(m)}; \theta) - F_{n+1}(\hvt^{p,M}, Z^{(m)}, X^{(m)})}^2 \\
    v^M(\theta) &= \inv{M} \sum_{m=1}^M \abs{\Phi_p(X_{T_{n}}^{(m)}; \theta) - F_{n+1}(\vt^{p}, Z^{(m)}, X^{(m)})}^2.
  \end{align*}
  The function $v^M$ clearly writes as the sum of i.i.d. random
  variables. Moreover, by combining~\eqref{eq:bound-F} and
  Assumptions~\ref{hyp:NN-smooth} and~\ref{hyp:int}, we obtain
  \begin{align*}
    \E\left[ \sup_{\theta \in \Theta_p}\abs{\Phi_p(X_{T_{n}}; \theta) - F_{n+1}(\vt^{p}, Z, X)}^2\right] \le 2 \kappa \E[1 + \abs{X_{T_n}}^{2q}] + \E\left[\max_{\ell \ge n+1} (Z_{T_\ell})^2\right] < \infty.
  \end{align*}
  Then, the sequence of random functions $v^M$ a.s. converges
  uniformly to the continuous function $v$ defined for
  $\theta \in \Theta_p$ by
  \[ v(\theta) = \E\left[ \abs{\Phi_p(X_{T_{n}}; \theta) -
        F_{n+1}(\vt^{p}, Z, X)}^2\right].
  \]
  It remains to prove that
  $\sup_{\theta \in \Theta_p} \abs{\hat v^M(\theta) - v^M(\theta)} \to
  0$ a.s. when $M \to \infty$.
  \begin{align*}
    &\abs{\hat v^M(\theta) - v^M(\theta)} \\
    & \le \inv{M} \sum_{m=1}^M \abs{2 \Phi_p(X_{T_{n}}^{(m)}; \theta) - F_{n+1}(\hvt^{p,M}, Z^{(m)}, X^{(m)}) - F_{n+1}(\vt^{p}, Z^{(m)}, X^{(m)})} \\
    & \qquad \qquad \abs{F_{n+1}(\hvt^{p,M}, Z^{(m)}, X^{(m)}) - F_{n+1}(\vt^{p}, Z^{(m)}, X^{(m)})} \\
    & \le \inv{M} \sum_{m=1}^M 2 \left(\kappa (1 + |X_{T_{n}}^{(m)}|^{q}) + \max_{\ell \ge n+1} |Z_{T_\ell}| \right)\\
    & \qquad \qquad \abs{F_{n+1}(\hvt^{p,M}, Z^{(m)}, X^{(m)}) - F_{n+1}(\vt^{p}, Z^{(m)}, X^{(m)})}
  \end{align*}
  where we have used~\eqref{eq:bound-F} and
  Assumptions~\ref{hyp:NN-smooth} and~\ref{hyp:int}. Then from
  Lemma~\ref{lem:Flip}, we can write
  \begin{align*}
    &\abs{\hat v^M(\theta) - v^M(\theta)} \\
    & \le \inv{M} \sum_{m=1}^M 2 \left(\kappa (1 + |X_{T_{n}}^{(m)}|^{q}) + \max_{\ell \ge k+1} |Z_{T_\ell}| \right)\\
    & \qquad \qquad \left(\sum_{i=n+1}^N \abs{Z_{T_i}^{(m)}}\right) \left(\sum_{i=n+1}^{N-1} \ind{\abs{Z_{T_i}^{(m)} - \Phi_p(X_{T_i}^{(m)}; \t^{p}_i)} \le \abs{\Phi_p(X_{T_i}^{(m)}; \htheta^{p,M}_i) -\Phi_p(X_{T_i}^{(m)}; \t^{p}_i)} }\right) \\
    & \le \inv{M} \sum_{m=1}^M C \left(\kappa_p (1 + |X_{T_{n}}^{(m)}|^{2q}) + \sum_{i=n+1}^N \abs{Z_{T_i}^{(m)}}^{2} \right)\\
    & \qquad \qquad \left(\sum_{i=n+1}^{N-1} \ind{\abs{Z_{T_i}^{(m)} - \Phi_p(X_{T_i}^{(m)}; \t^{p}_i)} \le \abs{\Phi_p(X_{T_i}^{(m)}; \htheta^{p,M}_i) -\Phi_p(X_{T_i}^{(m)}; \t^{p}_i)} }\right)
  \end{align*}
  where $C$ is a generic constant only depending on $\kappa_p$, $n$
  and $N$.

  Let $\varepsilon >0$. Using the induction assumption and the strong
  law of large numbers, we have
  \begin{align*}
    &\limsup_M \sup_{\theta \in \Theta_p} \abs{\hat v^M(\theta) - v^M(\theta)} \\
    & \le \limsup_M \inv{M} \sum_{m=1}^M C \left((1 + |X_{T_{n}}^{(m)}|^{2q}) + \sum_{i=n+1}^N \abs{Z_{T_i}^{(m)}}^{2} \right) \\
    & \qquad \qquad \qquad
      \left(\sum_{i=n+1}^{N-1} \ind{\abs{Z_{T_i}^{(m)} - \Phi_p(X_{T_i}^{(m)}; \t^{p}_i)} \le \varepsilon } \right) \\
    & \le C \E\left[\left((1 + |X_{T_{n}}|^{2q}) + \sum_{i=n+1}^N \abs{Z_{T_i}}^{2} \right)
      \left(\sum_{i=n+1}^{N-1} \ind{\abs{Z_{T_i} - \Phi_p(X_{T_i}; \t^{p}_i)} \le \varepsilon } \right) \right]
  \end{align*}
  From~\ref{hyp:P-nul}, we deduce that
  $\lim_{\varepsilon \to 0} \ind{\abs{Z_{T_i} - \Phi_p(X_{T_i};
      \t^{p}_i)} \le \varepsilon } = 0$ a.s. and we conclude that
  a.s. $\hat v^M - v^M$ converges to zero uniformly. As we have
  already proved that a.s. $v^M$ converges uniformly to the continuous
  function $v$, we deduce that a.s. $\hat v^M$ converges uniformly to
  $v$. From Lemma~\ref{lem:optim-cv}, we conclude that
  $d(\htheta^{p,M}_n, S^p_n) \to 0$ a.s. when $M \to \infty$. We restrict to a subset with probability one of the original probability space on which this convergence holds and the random functions $\Phi(X_{T_{N-1}}^{(1)}; \cdot)$ are uniformly continuous, see~\ref{hyp:NN-smooth}. There exists a sequence $(\xi^{p,M}_{n})_M$ taking values in $S^p_n$ such that $\abs{\htheta_{n}^{p,M} - \xi^{p,M}_{n}} \to 0$ when $M \to \infty$. The uniform continuity of the random functions $\Phi(X_{T_{N-1}}^{(1)}; \cdot)$ yields that
  \begin{align*}
    \Phi(X_{T_{N-1}}^{(1)};\htheta^{p,M}_{n}) - \Phi(X_{T_{n}}^{(1)};\xi^{p,M}_n) \to 0 \quad \mbox{when } M \to \infty.
  \end{align*}
  Then, we conclude from Assumption~\ref{hyp:uniqueNN}, that
  $\Phi(X_{T_{n}}^{(1)};\htheta^{p,M}_{n}) \to
  \Phi(X_{T_{n}}^{(1)};\theta^{p}_{n})$ when ${M \to \infty}$.
\end{proof}

Now that the convergence of the expansion is established, we can study
the convergence of $U^{p,M}_0$ to $U^{p}_0$ when $M \to \infty$.
\begin{theorem}
  \label{thm:slln}
  Assume that Assumptions~\ref{hyp:NN-smooth}-\ref{hyp:uniqueNN}
  hold. Then, for $\alpha=1,2$ and every $n=1,\dots,N$,
  \[
  \lim_{M \to \infty} \inv{M} \sum_{m=1}^M \left(Z_{\htau_n^{p,(m)}}^{(m)}\right)^\alpha = \E\left[\left(Z_{\tau_n^p}\right)^\alpha\right] \quad a.s.
  \]
\end{theorem}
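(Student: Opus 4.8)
The plan is to reduce the claim to a strong law of large numbers together with the parameter convergence established in Proposition~\ref{prop:PhithetaM}. Recalling that $Z_{\htau_n^{p,(m)}}^{(m)} = F_n(\hvt^{p,M}, Z^{(m)}, X^{(m)})$ and $Z_{\tau_n^p}^{(m)} = F_n(\vt^{p}, Z^{(m)}, X^{(m)})$, I would split
\[
  \inv{M}\sum_{m=1}^M \bigl(F_n(\hvt^{p,M}, Z^{(m)}, X^{(m)})\bigr)^\alpha
  = \inv{M}\sum_{m=1}^M \bigl(F_n(\vt^{p}, Z^{(m)}, X^{(m)})\bigr)^\alpha + R_M^{\alpha}.
\]
The pairs $(Z^{(m)}, X^{(m)})$ are i.i.d.\ and, by~\eqref{eq:bound-F}, $\abs{F_n(\vt^p, Z, X)} \le \max_{k\ge n}\abs{Z_{T_k}}\in L^2$, so $\bigl(F_n(\vt^p, Z, X)\bigr)^\alpha\in L^1$ for $\alpha=1,2$; hence the first term converges a.s.\ to $\E[(F_n(\vt^p, Z, X))^\alpha] = \E[(Z_{\tau_n^p})^\alpha]$ by the strong law of large numbers. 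It remains to prove $R_M^{\alpha}\to 0$ a.s.

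Using $\abs{a^\alpha-b^\alpha}\le\abs{a-b}$ when $\alpha=1$ and $\abs{a^2-b^2}\le\abs{a-b}(\abs a+\abs b)$ when $\alpha=2$, and invoking~\eqref{eq:bound-F} once more to bound $\abs{F_n}$ for either parameter, I obtain $\abs{R_M^{\alpha}}\le \inv{M}\sum_{m=1}^M w_m \bigl|F_n(\hvt^{p,M}, Z^{(m)}, X^{(m)}) - F_n(\vt^{p}, Z^{(m)}, X^{(m)})\bigr|$ with $w_m := 1 + 2\max_{k\ge n}\abs{Z_{T_k}^{(m)}}$. Lemma~\ref{lem:Flip}, applied with $a=\hvt^{p,M}$ and $b=\vt^p$, then gives
\[
  \abs{R_M^{\alpha}} \le \inv{M}\sum_{m=1}^M w_m \Bigl(\sum_{i=n}^N\abs{Z_{T_i}^{(m)}}\Bigr)
  \sum_{i=n}^{N-1}\ind{\abs{Z_{T_i}^{(m)}-\Phi_p(X_{T_i}^{(m)};\theta_i^p)}\le\abs{\Phi_p(X_{T_i}^{(m)};\htheta_i^{p,M})-\Phi_p(X_{T_i}^{(m)};\theta_i^p)}}.
\]

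Proving that the right-hand side tends to $0$ a.s.\ is the heart of the matter, and I expect it to be the main obstacle; it mirrors Step~2 of the proof of Proposition~\ref{prop:PhithetaM}, re-run at the present index $n$. Fix $\varepsilon>0$. For each $i\in\{n,\dots,N-1\}$, Proposition~\ref{prop:PhithetaM} yields $d(\htheta_i^{p,M},\cs^p_i)\to 0$ a.s.; since $\theta\mapsto\Phi_p(X_{T_i}^{(m)};\theta)$ is uniformly continuous on the compact set $\Theta_p$ (Assumption~\ref{hyp:NN-smooth}) and is constant in $\theta$ over $\cs^p_i$ (Assumption~\ref{hyp:uniqueNN}), for a.e.\ realization and all $M$ large enough one has $\abs{\Phi_p(X_{T_i}^{(m)};\htheta_i^{p,M})-\Phi_p(X_{T_i}^{(m)};\theta_i^p)}\le\Delta_i(\varepsilon,X_{T_i}^{(m)})$ simultaneously for all $m\le M$, where $\Delta_i(\varepsilon,x)$ denotes the modulus of continuity of $\Phi_p(x;\cdot)$ at scale $\varepsilon$ and satisfies $\Delta_i(\varepsilon,x)\to 0$ as $\varepsilon\to 0$ for a.e.\ realization. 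The delicate point — and the reason one cannot simply put a deterministic $\varepsilon$ in the indicator — is that this modulus of continuity depends on $x$, so the bound has to be carried through the sample-dependent quantity $\Delta_i$. Substituting this bound, controlling $w_m$ as above, and applying the strong law of large numbers to the resulting i.i.d.\ average (its summand being integrable, a bounded factor times a product of two $L^2$ variables) gives, for every $\varepsilon>0$,
\[
  \limsup_M\abs{R_M^{\alpha}} \le \E\Bigl[ \bigl(1 + 2\max_{k\ge n}\abs{Z_{T_k}}\bigr)\Bigl(\sum_{i=n}^N\abs{Z_{T_i}}\Bigr)\sum_{i=n}^{N-1}\ind{\abs{Z_{T_i}-\Phi_p(X_{T_i};\theta_i^p)}\le\Delta_i(\varepsilon,X_{T_i})} \Bigr]\quad\text{a.s.}
\]
Letting $\varepsilon\to 0$, Assumption~\ref{hyp:P-nul} forces $\ind{\abs{Z_{T_i}-\Phi_p(X_{T_i};\theta_i^p)}\le\Delta_i(\varepsilon,X_{T_i})}\to 0$ a.s., and dominated convergence (the integrand being bounded uniformly in $\varepsilon$ by an $L^1$ variable) drives the right-hand side to $0$; hence $R_M^{\alpha}\to 0$ a.s., which completes the proof.
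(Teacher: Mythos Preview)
Your argument is correct and follows the same route as the paper's own proof: split into an i.i.d.\ average handled by the strong law of large numbers plus a remainder, control the remainder via Lemma~\ref{lem:Flip} and the parameter convergence established in Proposition~\ref{prop:PhithetaM}, and conclude by letting the threshold shrink to zero and invoking Assumption~\ref{hyp:P-nul}.

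The one place where you depart from the paper is the treatment of the indicator threshold. The paper simply replaces $\abs{\Phi_p(X_{T_i}^{(m)};\htheta_i^{p,M})-\Phi_p(X_{T_i}^{(m)};\theta_i^p)}$ by a fixed $\varepsilon$ for all $m\le M$ simultaneously, citing only Proposition~\ref{prop:PhithetaM} (whose statement concerns a single sample $m=1$). You instead carry the sample-dependent modulus of continuity $\Delta_i(\varepsilon,X_{T_i}^{(m)})$ through to the limit and only then send $\varepsilon\to 0$. This is the more careful version: under Assumption~\ref{hyp:NN-smooth} the modulus of continuity of $\theta\mapsto\Phi_p(x;\theta)$ may grow with $\abs{x}$, so a uniform-in-$m$ bound by a fixed $\varepsilon$ does not follow immediately from $d(\htheta_i^{p,M},\cs_i^p)\to 0$, whereas your device makes the bound deterministic in the sense required for the SLLN while remaining valid for every $m$. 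Two minor remarks: the convergence $d(\htheta_i^{p,M},\cs_i^p)\to 0$ that you invoke is proved \emph{inside} Proposition~\ref{prop:PhithetaM}, not in its statement; and for the SLLN step you need $\Delta_i(\varepsilon,\cdot)$ measurable, which holds as soon as $(x,\theta)\mapsto\Phi_p(x;\theta)$ is jointly continuous (true for all the standard activations, though not literally stated in~\ref{hyp:NN-smooth}).
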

\begin{proof}
  Note that $\E[(Z_{\tau_n^p})^\alpha] = \E[F_n(\vt^p, Z, X)^\alpha]$
  and by the strong law of large numbers
  \[
    \lim_{M \to \infty} \inv{M} \sum_{m=1}^M F_n(\vt^p, Z^{(m)}, X^{(m)})^\alpha = \E[F_n(\vt^p, Z, X)^\alpha] \quad a.s.
  \]
  Hence, we have to prove that
  \[
    \Delta F_M = \inv{M} \sum_{m=1}^M \left(F_n(\hvt^{p,M}, Z^{(m)},
      X^{(m)})^\alpha - F_n(\vt^p, Z^{(m)}, X^{(m)})^\alpha\right)
    \xrightarrow[M \to \infty]{a.s} 0.
  \]
  For any $x, y \in \R$, and $\alpha=1,2$,
  $\abs{x^\alpha - y^\alpha}= \abs{x - y} \abs{x^{q-1} + y^{q-1}}$.
  Using Lemma~\ref{lem:Flip} and that
  $\abs{F_n(\gamma, z, g)} \le \max_{n \le j \le N} \abs{z_j}$, we
  have
  \begin{align*}
    & \abs{\Delta F_M}  \le \inv{M} \sum_{m=1}^M  \abs{F_n(\hvt^{p,M}, Z^{(m)}, X^{(m)})^\alpha - F_n(\vt^p, Z^{(m)}, G^{(m)})^\alpha} \\
    & \le 2  \inv{M} \sum_{m=1}^M  \sum_{i=n}^N \max_{n \le j \le N} \abs{Z_{T_j}^{(m)}}\abs{Z_{T_{i+1}}^{(m)}} \left(\sum_{i=n}^{N-1} \ind{\abs{Z_{T_i}^{(m)} - \Phi_p(X_{T_i}^{(m)}; \t^{p}_i)} \le \abs{\Phi_p(X_{T_i}^{(m)}; \htheta^{p,M}_i) -\Phi_p(X_{T_i}^{(m)}; \t^{p}_i)} }\right)
   \end{align*}
   Using Proposition~\ref{prop:PhithetaM}, for all $i=n,\dots,N-1$,
   $\abs{\Phi_p(X_{T_i}^{(1)}; \htheta^{p,M}_i) -
     \Phi_p(X_{T_i}^{(1)}; \t^{p}_i)} \to 0$ when $M \to \infty$. Then
   for any $\varepsilon > 0$,
   \begin{align*}
     & \limsup_M \abs{\Delta F_M} \\
     & \le 2 \limsup_M \inv{M} \sum_{m=1}^M  \sum_{i=n}^N \max_{n \le j \le N} \abs{Z_{T_j}^{(m)}} \abs{Z_{T_{i+1}}^{(m)}} \left(\sum_{i=k}^{N-1} \ind{\abs{Z_{T_i}^{(m)} - \Phi_p(X_{T_i}^{(m)}; \t^{p}_i)} \le  \varepsilon } \right) \\
     & \le 2 \E\left[ \sum_{i=n}^N \max_{n \le j \le N} \abs{Z_{T_j}} \abs{Z_{T_{i+1}}} \left(\sum_{i=n}^{N-1} \ind{\abs{Z_{T_i} - \Phi_p(X_{T_i}^{(m)}; \t^{p}_i)} \le \varepsilon} \right) \right]
   \end{align*}
   where the last inequality follows from the strong law of larger
   numbers as $\E[\max_{n \le j \le N} \abs{Z_{T_j}}^2 ] < \infty$. We
   conclude that $\limsup_M \abs{\Delta F_M} = 0$ by letting
   $\varepsilon$ go to $0$ and by using~\ref{hyp:P-nul}.
\end{proof}
The case $\alpha=1$ proves the strong law of large numbers for the algorithm.
Note that solving the minimisation problem~\eqref{eq:theta_pM} mixes all stopped
paths $Z_{\htau_n^{p,(m)}}^{(m)}$, it is unlikely that the estimators $\inv{M}
\sum_{m=1}^M Z_{\htau_n^{p,(m)}}^{(m)}$ for $1 \le n \le N$ are unbiased. We
recall that $U_n^{p,M} = \inv{M} \sum_{m=1}^M F_n(\hvt^{p,M}, Z^{(m)},G^{(m)})$
and $Z_{\tau_n^p} = F_n(\vt^p, Z, X)$. Then,
\begin{align*}
  & \E\left[U_n^{p,M}\right]  - \E\left[Z_{\tau_n^p} \right] = \E\left[ \inv{M} \sum_{m=1}^M \left(F_n(\hvt^{p,M}, Z^{(m)}, X^{(m)}) - F_n(\vt^p, Z^{(m)}, X^{(m)})\right)\right] \\
& = \E\left[ F_n(\hvt^{p,M}, Z^{(1)}, X^{(1)}) - F_n(\vt^p, Z^{(1)}, X^{(1)}) \right]
\end{align*}
where we have used that all the random variables have the same
distribution.

\newpage

\section{Numerical experiments}
\label{sec:numerics}

In this section, we compare the results given by the standard
Longstaff Schwartz approach with polynomial regression to the
algorithm described in Section~\ref{sec:algo}. The only difference
between the two methods lies in the way of approximating the
conditional expectation at each time step. The two algorithms are
implemented in Python using the \emph{PolynomialFeatures} toolbox of
\emph{scikit-learn} (\cite{scikit-learn}) for the polynomial regression
and the \emph{tensorFlow} toolbox (\cite{tf2015}) to compute the neural
network approximation. We have chosen options for which there is a
substantial gap between the European and Bermudan prices, which means
that there exists indeed an early exercise strategy and that the
accuracy of the conditional expectations approximations plays a major
role.

\paragraph{Details on the algorithm used in the experiments}
In all the experiments, we have run our algorithm $100$ times to compute the
average price along with the half-width of the confidence interval for the price
estimator reported in the tables between parentheses in the form $(\pm \cdot)$.
Although the confidence interval is informative to know how much we can trust a
price, it completely squeezes the bias related to the approximation of the
conditional expectations. Remember that the estimator given
by~\eqref{eq:price-mc} is not an unbiased estimator and one should therefore be
very careful when comparing the results. Keep in mind that a higher price does not always mean a better price. 

For the activation function $\sigma_{a}$ in~\eqref{eq:nn}, we have used
the leaky ReLU function defined by
\begin{equation*}
  \sigma_{a}(x) = \begin{cases}
    x & \text{if } x \ge 0 \\
    0.3 \, \times \, x & \text{if } x < 0
  \end{cases}
\end{equation*}
We relied on the ADAM algorithm to fit the neural network at each time
step and the columns \emph{epochs} refer to the number of times we go
through the entire data set to train the network. Note that using
\emph{epochs} $= 1$ corresponds to the standard approach used in
online stochastic approximation, in which each data is used only
once. We use the same neural network through all the time steps and in
particular at a time step $0< n < N-1$, we take the optimal parameter
at time $n+1$, $\htheta_{n+1}^{p,M}$, as the starting point of the
training algorithm. Because of this smart choice, there is actually no
use setting \emph{epochs}$ > 1$ for $n < N-1$. We observed in our
numerical experiments that passing over all the data several times
does not reduce the training error at times $0 < n < N-1$, whereas it
does help when fitting the first neural network at time $N-1$. This
allows for huge computational time savings.

For learning the continuation value at each exercising date, we only use the
in-the-money paths as already suggested in the original Longstaff Schwartz
algorithm~\cite{LS01}. This means that the definition of the optimization
problem~\eqref{eq:theta_p} has to be changed into
\begin{align*}
  \inf_{\theta \in \Theta_p} \E\left[ \abs{\Phi_p(X_{T_n}; \theta) -
  Z_{\tau_{n+1}^p}}^2 \ind{Z_{T_n} > 0} \right].
\end{align*}
The empirical counterpart~\eqref{eq:theta_pM} needs to be adapted in a similar way.
Note that it does not change the theoretical analysis of the algorithm but it
is numerically more efficient. We proceed similarly in the original Longstaff
Schwartz algorithm we are comparing to in the next sections.

\subsection{Examples in the Black Scholes model}

The $d-$dimensional Black Scholes model writes for $j \in \{1, \dots, d\}$
\begin{align*}
  dS^j_t = S^j_t ( r_t dt + \sigma^j L_j dB_t)
\end{align*}
where $B$ is a Brownian motion with values in $\R^d$,
$\sigma = (\sigma^1, \dots, \sigma^d)$ is the vector of volatilities,
assumed to be deterministic and positive at all times and $L_j$ is the
$j$-th row of the matrix $L$ defined as a square root of the
correlation matrix $\Gamma$, given by
\begin{equation*}
  \Gamma = \begin{pmatrix}
    1 & \rho & \hdots & \rho\\
    \rho & 1 &\ddots & \vdots\\
    \vdots&\ddots&\ddots& \rho\\
    \rho &\hdots & \rho & 1
  \end{pmatrix}
\end{equation*}
where $\rho \in ]-1 / (d-1), 1]$ to ensure that $\Gamma$ is positive definite.

\subsubsection{Benchmarking the method on the one-dimensional put option}

Before investigating more elaborate numerical examples, we want to test our
method on the one dimensional put option. As standard as this example might be,
getting a trustworthy reference price is not an easy task. For this example, we
compare our approach to the benchmark price computed by a convolution method
in~\cite{Oosterlee08} and later used as a reference price
in~\cite{oosterlee09}. Their reference price is $11.987$ where all the digits
are accurate.

\begin{table}[htp]
  \centering\begin{tabular}{cc|ccc}
    \hline
    $L$ & $d_l$ & epochs=1 & epochs=5 & epochs=10 \\
    \hline
    2 & 32 & 11.96 ($\pm$ 0.07) & 11.97 ($\pm$ 0.06) & 11.98 ($\pm$ 0.057) \\
    2 & 128 & 11.96 ($\pm$ 0.07) & 11.97 ($\pm$ 0.056) & 11.97 ($\pm$ 0.061) \\
    2 & 512 & 11.95 ($\pm$ 0.076) & 11.95 ($\pm$ 0.08) & 11.96 ($\pm$ 0.071) \\
    4 & 32 & 11.93 ($\pm$ 0.083) & 11.94 ($\pm$ 0.09) & 11.96 ($\pm$ 0.075) \\
    4 & 128 & 11.89 ($\pm$ 0.145) & 11.93 ($\pm$ 0.097) & 11.95 ($\pm$ 0.081) \\
    4 & 512 & 11.86 ($\pm$ 0.127) & 11.93 ($\pm$ 0.096) & 11.94 ($\pm$ 0.072) \\
    8 & 32 & 11.89 ($\pm$ 0.12) & 11.93 ($\pm$ 0.117) & 11.95 ($\pm$ 0.096) \\
    8 & 128 & 11.88 ($\pm$ 0.126) & 11.92 ($\pm$ 0.11) & 11.94 ($\pm$ 0.102) \\
    8 & 512 & 11.85 ($\pm$ 0.129) & 11.9 ($\pm$ 0.163) & 11.92 ($\pm$ 0.111) \\
    \hline
  \end{tabular}
  \caption{Put option with $r=0.1$, $T=1$, $K=110$, $S_0=100$, $\sigma=0.25$, $N=10$ and $M=100,000$. The true price is $11.987$.}
  \label{tab:put-bs}
\end{table}

We can see from Table~\ref{tab:put-bs} that using a really small neural
network with only one input layer with $32$ intermediate neurons and
one output layer --- meaning that the activation function is applied
only once --- already yields very good results with a relative
accuracy greater than $1\%$. Increasing the number of epochs helps 
correct the bias created by the truncated approximation of the
conditional expectations. The larger the neural network (see in particular
the cases $L=8$), the more epochs we need to ensure that the fitting
procedure has sufficiently well converged in order to make the most of
the capabilities of the network to accurately approximate the
conditional expectations. Note that increasing the size of the network
also increases the overall variance of the algorithm as in the case of
a polynomial regression when the size of the regression basis
increases (see~\cite{glasserman04number} for details).

\subsubsection{A geometric basket option in the Black Scholes model}
\label{sec:numerics-geom}

Benchmarking a new method on high dimensional products becomes hardly feasible
as almost no high dimensional Bermudan options can be priced accurately in a
reasonable time. An exception to this is the geometric put option with payoff
$(K - (\prod_{j=1}^d S^j_t)^{1/d})_+$. Easy calculations show that the price of
this $d-$dimensional option equals the one of the $1-$dimensional option with
the following parameters
\begin{align*}
  \hat S_0 = \left( \prod_{j=1}^d S_0^j \right)^{1/d}; \quad
  \hat \sigma = \inv{d} \sqrt{\sigma^t \Gamma \sigma}; \quad
  \hat \delta  = \frac{1}{d} \sum_{j=1}^d \left( \delta^j + \inv{2} (\sigma^j)^2 \right)
  - \inv{2} (\hat \sigma)^2.
\end{align*}
In every numerical experiments on the geometric basket
option, we report the price of the equivalent one dimensional Bermudan
option obtained by the CRR tree method~\cite{crr} with $100,000$
discretization time steps.

\begin{table}[htbp]
  \centering
  \begin{tabular}{cc|ccc}
    \hline
    $L$ & $d_l$ & epochs=1 & epochs=5 & epochs=10 \\
    \hline
    2 & 32 & 4.55 ($\pm$ 0.038) & 4.56 ($\pm$ 0.041) & 4.56 ($\pm$ 0.031) \\
    2 & 128 & 4.55 ($\pm$ 0.032) & 4.56 ($\pm$ 0.04) & 4.56 ($\pm$ 0.038) \\
    2 & 512 & 4.54 ($\pm$ 0.04) & 4.55 ($\pm$ 0.033) & 4.55 ($\pm$ 0.041) \\
    4 & 32 & 4.52 ($\pm$ 0.044) & 4.54 ($\pm$ 0.04) & 4.55 ($\pm$ 0.036) \\
    4 & 128 & 4.52 ($\pm$ 0.044) & 4.54 ($\pm$ 0.033) & 4.55 ($\pm$ 0.041) \\
    4 & 512 & 4.5 ($\pm$ 0.046) & 4.54 ($\pm$ 0.042) & 4.54 ($\pm$ 0.045) \\
    8 & 32 & 4.52 ($\pm$ 0.043) & 4.54 ($\pm$ 0.049) & 4.55 ($\pm$ 0.052) \\
    8 & 128 & 4.51 ($\pm$ 0.046) & 4.53 ($\pm$ 0.045) & 4.54 ($\pm$ 0.045) \\
    8 & 512 & 4.47 ($\pm$ 0.181) & 4.51 ($\pm$ 0.051) & 4.52 ($\pm$ 0.149) \\
    \hline
  \end{tabular}
  \caption{Prices for the geometric basket put option with parameters
    $d=2$, $S_0^i = 100$, $\sigma^i = 0.2$, $\rho=0$, $\delta^j = 0.2$,
    $T=1$, $r = 0.05$, $K=100$, $N = 10$ and $M=100,000$. The true
    price is $4.57$ (all digits are significant).} \label{tab:geom-2}
\end{table}

We can see from the numerical results of Table~\ref{tab:geom-2} that even a
small neural network is able to capture the continuation values very well.
Increasing the size of the network does not help get a better price
but increases the variance unless we ensure a very accurate fit of the network
by going through the data several times (see the column \emph{epochs=10} for
instance), which in turn leads to a much larger computational cost. In
comparison, the prices obtained with the standard Longstaff Schwartz algorithm
with polynomial regressions of order respectively $1$, $3$ and $6$ are $4.47\,
\pm 0.015$, $4.56\, \pm 0.02$ and $4.57 \,\pm 0.017$. On this small
dimensional example, a low degree polynomial as well as a small neural network
give a very accurate price.

\begin{table}[htbp]
  \centering
  \begin{tabular}{cc|ccc}
    \hline
    $L$ & $d_l$ & epochs=1 & epochs=5 & epochs=10 \\
    \hline
    2 & 32 & 2.91 ($\pm$ 0.027) & 2.92 ($\pm$ 0.023) & 2.93 ($\pm$ 0.019) \\
    2 & 128 & 2.91 ($\pm$ 0.025) & 2.93 ($\pm$ 0.021) & 2.94 ($\pm$ 0.024) \\
    2 & 512 & 2.9 ($\pm$ 0.025) & 2.93 ($\pm$ 0.023) & 2.94 ($\pm$ 0.027) \\
    4 & 32 & 2.9 ($\pm$ 0.027) & 2.92 ($\pm$ 0.029) & 2.94 ($\pm$ 0.021) \\
    4 & 128 & 2.9 ($\pm$ 0.033) & 2.92 ($\pm$ 0.023) & 2.93 ($\pm$ 0.027) \\
    4 & 512 & 2.89 ($\pm$ 0.028) & 2.91 ($\pm$ 0.033) & 2.93 ($\pm$ 0.033) \\
    8 & 32 & 2.9 ($\pm$ 0.02) & 2.92 ($\pm$ 0.029) & 2.94 ($\pm$ 0.024) \\
    8 & 128 & 2.9 ($\pm$ 0.036) & 2.92 ($\pm$ 0.026) & 2.94 ($\pm$ 0.026) \\
    8 & 512 & 2.88 ($\pm$ 0.042) & 2.91 ($\pm$ 0.033) & 2.92 ($\pm$ 0.034) \\
    \hline
  \end{tabular}
  \caption{Prices for the geometric basket put option with parameters
    $d=10$, $S_0^i = 100$, $\sigma^i = 0.2$, $\rho=0.2$,
    $\delta^j = 0$, $T=1$, $r = 0.05$, $K=100$, $n = 10$ and
    $M=100,000$. The true price is $2.92$.} \label{tab:geom-10}
\end{table}

The numerical results for the $10-$dimensional geometric put option (see
Table~\ref{tab:geom-10}) show the same behavior as the low dimensional
problem. Using a small neural network provides very accurate results
within $0.1\%$ of the true price. Passing several times over the data to train
the network helps a little reduce the bias of the price estimator but at the
expense of a much higher computational effort. In comparison, the prices
obtained with the standard Longstaff Schwartz algorithm with polynomial
regression of order respectively $1$ and $3$ are $2.86\, \pm 0.014$ and $2.96\,
\pm 0.014$. Note that a regression of order $6$ is unreachable in dimension
$10$. Unlike all other examples, in which the standard Longstaff Schwartz
algorithm with polynomial regression tends to exhibit a systematic negative
bias, increasing the polynomial degree in this example yields a price above the
true one. Note that the true price is always within the confidence intervals
reported in Table~\ref{tab:geom-10}. Our method does not seem to suffer from
this positive bias phenomenon.

\begin{table}[htbp]
  \centering \begin{tabular}{cc|ccc}
    \hline
    $L$ & $d_l$ & epochs=1 & epochs=5 & epochs=10 \\
    \hline
    2 & 128 & 2.52 ($\pm$ 0.025) & 2.57 ($\pm$ 0.019) & 2.61 ($\pm$ 0.021) \\
    2 & 256 & 2.51 ($\pm$ 0.027) & 2.57 ($\pm$ 0.018) & 2.61 ($\pm$ 0.017) \\
    2 & 512 & 2.5 ($\pm$ 0.011) & 2.56 ($\pm$ 0.021) & 2.61 ($\pm$ 0.023) \\
    4 & 128 & 2.51 ($\pm$ 0.03) & 2.59 ($\pm$ 0.023) & 2.78 ($\pm$ 0.045) \\
    4 & 256 & 2.51 ($\pm$ 0.031) & 2.57 ($\pm$ 0.018) & 2.75 ($\pm$ 0.023) \\
    4 & 512 & 2.49 ($\pm$ 0.02) & 2.55 ($\pm$ 0.025) & 2.65 ($\pm$ 0.035) \\
    8 & 128 & 2.51 ($\pm$ 0.018) & 2.58 ($\pm$ 0.022) & 2.76 ($\pm$ 0.051) \\
    8 & 256 & 2.51 ($\pm$ 0.026) & 2.57 ($\pm$ 0.021) & 2.75 ($\pm$ 0.038) \\
    8 & 512 & 2.46 ($\pm$ 0.135) & 2.56 ($\pm$ 0.021) & 2.65 ($\pm$ 0.056) \\
    \hline
  \end{tabular}
  \caption{Prices for the geometric basket put option with parameters $d=40$, $S_0^i = 100$, $\sigma^i = 0.2$, $\rho=0.2$, $\delta^j = 0$, $T=1$, $r = 0.5$, $K=100$, $n = 10$ and $M=100,000$. The true price is $2.52$.} \label{tab:geom-50-1E5}
\end{table}

\begin{table}[htbp]
  \centering \begin{tabular}{cc|ccc}
    \hline
    $L$ & $d_l$ & epochs=1 & epochs=5 & epochs=10 \\
    \hline
    2 & 128 & 2.5 ($\pm$ 0.008) & 2.52 ($\pm$ 0.005) & 2.52 ($\pm$ 0.005) \\
    2 & 256 & 2.5 ($\pm$ 0.012) & 2.52 ($\pm$ 0.005) & 2.52 ($\pm$ 0.01) \\
    2 & 512 & 2.49 ($\pm$ 0.015) & 2.51 ($\pm$ 0.007) & 2.52 ($\pm$ 0.009) \\
    4 & 128 & 2.5 ($\pm$ 0.006) & 2.52 ($\pm$ 0.006) & 2.53 ($\pm$ 0.003) \\
    4 & 256 & 2.5 ($\pm$ 0.009) & 2.51 ($\pm$ 0.007) & 2.52 ($\pm$ 0.005) \\
    4 & 512 & 2.49 ($\pm$ 0.008) & 2.51 ($\pm$ 0.011) & 2.52 ($\pm$ 0.014) \\
    8 & 128 & 2.5 ($\pm$ 0.007) & 2.53 ($\pm$ 0.011) & 2.54 ($\pm$ 0.008) \\
    8 & 256 & 2.49 ($\pm$ 0.012) & 2.52 ($\pm$ 0.011) & 2.53 ($\pm$ 0.007) \\
    8 & 512 & 2.46 ($\pm$ 0.154) & 2.49 ($\pm$ 0.053) & 2.51 ($\pm$ 0.015) \\
    \hline
  \end{tabular}
  \caption{Prices for the geometric basket put option with parameters $d=40$, $S_0^i = 100$, $\sigma^i = 0.2$, $\rho=0.2$, $\delta^j = 0$, $T=1$, $r = 0.5$, $K=100$, $n = 10$ and $M=1,000,000$. The true price is $2.52$} \label{tab:geom-50-1E6}
\end{table}

Finally, we tested our approach on a $40-$dimensional geometric basket option with two different number of Monte Carlo samples $M=100,000$ (Table~\ref{tab:geom-50-1E5}) and $M=1,000,000$ (Table~\ref{tab:geom-50-1E6}). In both cases, the results obtained with really small neural networks ($L=2$) are already very accurate. However, one should note that increasing the number of epochs with $M=100,000$ leads to upper biased prices. This is the result of an overfitting phenomenon during the neural network calibration. Remember that the number of parameters of the network is $(1+d)(1+d_l) + (d_l)^{2(L - 2)}$. For $L=4$ and $d_l=128$, it already gives more than 26 million parameters. The learning capabilities of the network are such that it also learns the noise inside the data. Increasing the size of the data set ($M=1,000,000$) fixes this issue as one can see in Table~\ref{tab:geom-50-1E6}.

\subsubsection{A put basket option}

We consider a put basket option with payoff
\[
  \left(K - \sum_{i=1}^d \omega_i S_T^i\right)_+.
\]

\begin{table}[htp]
  \centering\begin{tabular}{cc|ccc}
    \hline
    $L$ & $d_l$ & epochs=1 & epochs=5 & epochs=10 \\
    \hline
    2 & 32 & 3.08 ($\pm$ 0.023) & 3.09 ($\pm$ 0.023) & 3.1 ($\pm$ 0.028) \\
    2 & 128 & 3.08 ($\pm$ 0.024) & 3.09 ($\pm$ 0.024) & 3.1 ($\pm$ 0.027) \\
    2 & 512 & 3.08 ($\pm$ 0.032) & 3.09 ($\pm$ 0.023) & 3.09 ($\pm$ 0.03) \\
    4 & 32 & 3.07 ($\pm$ 0.032) & 3.09 ($\pm$ 0.031) & 3.1 ($\pm$ 0.027) \\
    4 & 128 & 3.07 ($\pm$ 0.03) & 3.09 ($\pm$ 0.027) & 3.09 ($\pm$ 0.027) \\
    4 & 512 & 3.06 ($\pm$ 0.038) & 3.08 ($\pm$ 0.031) & 3.09 ($\pm$ 0.03) \\
    8 & 32 & 3.07 ($\pm$ 0.032) & 3.09 ($\pm$ 0.028) & 3.09 ($\pm$ 0.033) \\
    8 & 128 & 3.06 ($\pm$ 0.035) & 3.08 ($\pm$ 0.026) & 3.1 ($\pm$ 0.027) \\
    8 & 512 & 3.06 ($\pm$ 0.053) & 3.07 ($\pm$ 0.053) & 3.08 ($\pm$ 0.038) \\
    \hline
  \end{tabular}
  \caption{Basket option with $d=5$, $r=0.05$, $T=1$, $\sigma^i = 0.2$, $\omega^i = 1/d$, $S_0^i=100$, $\rho=0.2$, $K=100$, $N=10$ and $M = 100,000$.} \label{tab:basket-d5}
\end{table}

We test our algorithm in dimension $5$ and
report the results in Table~\ref{tab:basket-d5}. The standard Longstaff
Schwartz algorithm yields $3.11 \, \pm \,0.01 $ (resp.
$3.05 \, \pm \, 0.01$) for an order $3$ (resp. $1$) polynomial
regression. The prices reported in
Table~\ref{tab:basket-d5} are very close to the one obtained with an
order $3$ polynomial regression. We can see that using a very large
neural networks with several hidden layers and several hundreds of
neurons per layer does not really help. The results obtained for a
small network with a few dozens of neurons are already very good. The
difference between the results for $1$ epoch and $10$ epochs is about
half the width of the confidence interval, which makes it non
meaningful. Hence, there is no use putting more computational effort to
go through all the data set more than once.

Now, we turn to a high-dimensional problem and consider a call option on a
basket with $40$ assets to test the scalability of a our approach. The results
are reported in Table~\ref{tab:basket-d40-M1E5} for $M=100,000$ Monte Carlo
samples and in Table~\ref{tab:basket-d40-M1E6} for $M=1,000,000$ Monte Carlo
samples. As a comparison, \cite{GZvar2019} reported prices between $2.15$ and
$2.22$ for the same option. Our prices lie in this interval. We note that
increasing the number of epochs for a relatively small number of Monte Carlo
samples $M=100,000$ gives larger prices. This is all the more striking as the
size of the neural network is large, which clearly exhibits an over-fitting
phenomenon. Indeed, increasing the number of samples to $M=1,000,000$ fixes the
issue as the prices in Table~\ref{tab:basket-d40-M1E6} are between $2.14$ and
$2.18$ for all the neural network configurations and the number of epochs up to
$10$. This clearly shows that to avoid an upper bias, we need to increase the
number of samples when the size of the problem increases, which was already
noted with the standard Longstaff Scwhartz algorithm using polynomial
regression. With this example, we come to the same conclusions as for the
$40-$dimensional geometric basket option studied in
Section~\ref{sec:numerics-geom}.
\begin{table}[htp]
  \centering\begin{tabular}{cc|ccc}
    \hline
    $L$ & $d_l$ & epochs=1 & epochs=5 & epochs=10 \\
    \hline
    2 & 32 & 2.15 ($\pm$ 0.018) & 2.19 ($\pm$ 0.019) & 2.21 ($\pm$ 0.02) \\
    2 & 128 & 2.16 ($\pm$ 0.016) & 2.21 ($\pm$ 0.015) & 2.25 ($\pm$ 0.021) \\
    2 & 512 & 2.15 ($\pm$ 0.017) & 2.21 ($\pm$ 0.014) & 2.26 ($\pm$ 0.017) \\
    4 & 32 & 2.16 ($\pm$ 0.018) & 2.21 ($\pm$ 0.015) & 2.26 ($\pm$ 0.017) \\
    4 & 128 & 2.16 ($\pm$ 0.021) & 2.24 ($\pm$ 0.024) & 2.43 ($\pm$ 0.026) \\
    4 & 512 & 2.15 ($\pm$ 0.018) & 2.2 ($\pm$ 0.025) & 2.31 ($\pm$ 0.026) \\
    8 & 32 & 2.17 ($\pm$ 0.028) & 2.21 ($\pm$ 0.02) & 2.28 ($\pm$ 0.023) \\
    8 & 128 & 2.16 ($\pm$ 0.026) & 2.24 ($\pm$ 0.025) & 2.41 ($\pm$ 0.032) \\
    8 & 512 & 2.14 ($\pm$ 0.064) & 2.19 ($\pm$ 0.031) & 2.29 ($\pm$ 0.044) \\
    \hline
  \end{tabular}
  \caption{Basket option with $d=40$, $r=0.05$, $T=1$, $\sigma^i = 0.2$, $\omega^i = 1/d$, $S_0^i=100$, $\rho=0.2$, $K=100$, $N=10$ and $M = 100,000$.} \label{tab:basket-d40-M1E5}
\end{table}

\begin{table}[htp]
  \centering\begin{tabular}{cc|ccc}
    \hline
    $L$ & $d_l$ & epochs=1 & epochs=5 & epochs=10 \\
    \hline
    2 & 32 & 2.16 ($\pm$ 0.008) & 2.17 ($\pm$ 0.008) & 2.18 ($\pm$ 0.009) \\
    2 & 128 & 2.16 ($\pm$ 0.009) & 2.17 ($\pm$ 0.008) & 2.17 ($\pm$ 0.007) \\
    2 & 512 & 2.15 ($\pm$ 0.01) & 2.17 ($\pm$ 0.007) & 2.17 ($\pm$ 0.005) \\
    4 & 32 & 2.17 ($\pm$ 0.008) & 2.17 ($\pm$ 0.008) & 2.18 ($\pm$ 0.007) \\
    4 & 128 & 2.16 ($\pm$ 0.012) & 2.17 ($\pm$ 0.008) & 2.18 ($\pm$ 0.007) \\
    4 & 512 & 2.15 ($\pm$ 0.014) & 2.16 ($\pm$ 0.01) & 2.16 ($\pm$ 0.01) \\
    8 & 32 & 2.16 ($\pm$ 0.011) & 2.18 ($\pm$ 0.009) & 2.18 ($\pm$ 0.006) \\
    8 & 128 & 2.16 ($\pm$ 0.015) & 2.17 ($\pm$ 0.007) & 2.18 ($\pm$ 0.007) \\
    8 & 512 & 2.14 ($\pm$ 0.022) & 2.15 ($\pm$ 0.056) & 2.16 ($\pm$ 0.015) \\
    \hline
  \end{tabular}
  \caption{Basket option with $d=40$, $r=0.05$, $T=1$, $\sigma^i = 0.2$, $\omega^i = 1/d$, $S_0^i=100$, $\rho=0.2$, $K=100$, $N=10$ and $M = 1,000,000$.} \label{tab:basket-d40-M1E6}
\end{table}

\subsubsection{A call on the maximum of several assets in the Black Scholes model}

We consider a call option on the maximum of $d$ assets in the Black Scholes model with payoff
\begin{equation*}
  \left( \max_{i=1,\dots,d} S_T^i - K \right)_+.
\end{equation*}
The different sets of parameters are chosen as in~\cite{becker2019deep,GZvar2019} to easily compare the prices obtained with the different methods.

\begin{table}[htp]
  \centering\begin{tabular}{cc|cccc}
    \hline
    $L$ & $d_l$ & epochs=1 & epochs=5 & epochs=10 \\
    \hline
    2 & 32 & 25.97 ($\pm$ 0.117) & 25.95 ($\pm$ 0.141) & 25.94 ($\pm$ 0.133) \\
    2 & 128 & 25.95 ($\pm$ 0.11) & 25.95 ($\pm$ 0.126) & 26.02 ($\pm$ 0.113) \\
    2 & 512 & 25.92 ($\pm$ 0.104) & 25.96 ($\pm$ 0.116) & 26.01 ($\pm$ 0.153) \\
    4 & 32 & 25.83 ($\pm$ 0.132) & 25.97 ($\pm$ 0.146) & 26.02 ($\pm$ 0.139) \\
    4 & 128 & 25.76 ($\pm$ 0.203) & 25.91 ($\pm$ 0.162) & 25.99 ($\pm$ 0.162) \\
    4 & 512 & 25.63 ($\pm$ 0.238) & 25.85 ($\pm$ 0.181) & 25.94 ($\pm$ 0.146) \\
    8 & 32 & 25.72 ($\pm$ 0.185) & 25.91 ($\pm$ 0.134) & 25.96 ($\pm$ 0.169) \\
    8 & 128 & 25.61 ($\pm$ 0.251) & 25.84 ($\pm$ 0.186) & 25.93 ($\pm$ 0.143) \\
    8 & 512 & 25.49 ($\pm$ 0.265) & 25.76 ($\pm$ 0.223) & 25.83 ($\pm$ 0.2) \\
    \hline
  \end{tabular}
  \caption{Prices for the call option on the maximum of $5$ assets with
  parameters $S_0^j = 100$, $T=3$, $r = 0.05$, $K=100$, $\rho=0$, $\sigma^j = 0.2$, $\delta^j = 0.1$, $N = 9$ and $M=100,000$. } \label{tab:max-bs-d5}
\end{table}
Pricing a call on the maximum of a basket of assets is usually far more
difficult than a standard basket option because of the strong non linearity of
the maximum function. For the example of Table~\ref{tab:max-bs-d5}, the standard
Longstaff Schwartz algorithm yields $25.34 \, \pm \,0.06 $, $25.98 \, \pm \,
0.05$, $26.29 \, \pm \, 0.04$ for a polynomial regression of order $1$, $3$ and
$6$ respectively. The prices obtained with the polynomial regression vary a lot
with the degree of the regression. For this example, \cite{becker2019deep}
reported a $95\%$ confidence interval of $[26.14, 26.17]$. The prices reported
in Table~\ref{tab:max-bs-d5} are very close to this confidence interval.
A small neural network ($L=2$) enables us to get values within $1\%$ of
the true price, which is a great achievement considering the complexity of the
product and the small size of the approximation. As in the other examples, using
several passes through the data to train the neural network does not really
bring any improvement for small neural networks. For larger networks, it helps a
little but in the end larger networks are less accurate than smaller ones. To
get the best of larger neural networks, we would need more data to train the
networks, ie. more Monte Carlo samples as we already observed for the high dimensional geometric put option.

\begin{table}[htp]
  \centering\begin{tabular}{cc|cccc}
    \hline
    $L$ & $d_l$ & epochs=1 & epochs=5 & epochs=10 \\
    \hline
    2 & 128 & 68.99 ($\pm$ 0.179) & 69.26 ($\pm$ 0.164) & 69.42 ($\pm$ 0.169) \\
    2 & 256 & 69.07 ($\pm$ 0.149) & 69.42 ($\pm$ 0.125) & 69.45 ($\pm$ 0.138) \\
    2 & 512 & 69.11 ($\pm$ 0.194) & 69.43 ($\pm$ 0.18) & 69.51 ($\pm$ 0.167) \\
    4 & 128 & 68.91 ($\pm$ 0.365) & 69.29 ($\pm$ 0.334) & 69.55 ($\pm$ 0.339) \\
    4 & 256 & 68.72 ($\pm$ 0.358) & 69.24 ($\pm$ 0.341) & 69.5 ($\pm$ 0.369) \\
    4 & 512 & 68.54 ($\pm$ 0.548) & 69.17 ($\pm$ 0.356) & 69.34 ($\pm$ 0.359) \\
    8 & 128 & 68.59 ($\pm$ 0.613) & 69.32 ($\pm$ 0.348) & 69.71 ($\pm$ 0.497) \\
    8 & 256 & 68.57 ($\pm$ 0.797) & 69.25 ($\pm$ 0.564) & 69.4 ($\pm$ 0.484) \\
    8 & 512 & 68.32 ($\pm$ 1.444) & 69.01 ($\pm$ 0.738) & 69.49 ($\pm$ 0.487) \\
    \hline
  \end{tabular}
  \caption{Prices for the call option on the maximum of $50$ assets with
  parameters $S_0^j = 100$, $T=3$, $r = 0.05$, $K=100$, $\rho=0$, $\sigma^j = 0.2$, $\delta^j = 0.1$, $N = 9$ and $M=100,000$. } \label{tab:max-bs-d50-1E5}
\end{table}

\begin{table}[htp]
  \centering\begin{tabular}{cc|cccc}
    \hline
    $L$ & $d_l$ & epochs=1 & epochs=5 & epochs=10 \\
    \hline
    2 & 128 & 68.85 ($\pm$ 0.074) & 68.96 ($\pm$ 0.095) & 69.01 ($\pm$ 0.119) \\
    2 & 256 & 68.87 ($\pm$ 0.1) & 69.0 ($\pm$ 0.143) & 69.07 ($\pm$ 0.146) \\
    2 & 512 & 68.82 ($\pm$ 0.082) & 69.05 ($\pm$ 0.128) & 69.19 ($\pm$ 0.136) \\
    4 & 128 & 68.84 ($\pm$ 0.221) & 69.28 ($\pm$ 0.153) & 69.41 ($\pm$ 0.211) \\
    4 & 256 & 68.75 ($\pm$ 0.342) & 69.14 ($\pm$ 0.296) & 69.38 ($\pm$ 0.342) \\
    4 & 512 & 68.7 ($\pm$ 0.426) & 69.05 ($\pm$ 0.317) & 69.35 ($\pm$ 0.254) \\
    8 & 128 & 68.81 ($\pm$ 0.277) & 69.28 ($\pm$ 0.291) & 69.64 ($\pm$ 0.22) \\
    8 & 256 & 68.57 ($\pm$ 0.512) & 69.34 ($\pm$ 0.378) & 69.65 ($\pm$ 0.414) \\
    \hline
  \end{tabular}
  \caption{Prices for the call option on the maximum of $50$ assets with
  parameters $S_0^j = 100$, $T=3$, $r = 0.05$, $K=100$, $\rho=0$, $\sigma^j = 0.2$, $\delta^j = 0.1$, $N = 9$ and $M=1,000,000$. } \label{tab:max-bs-d50-1E6}
\end{table}

We tested the scalability of our approach on a $50-$dimensional max-call option.
The results are reported in Table~\ref{tab:max-bs-d50-1E5} for $M=100,000$ and
Table~\ref{tab:max-bs-d50-1E6} for $M=1,000,000$. \cite{becker2019deep} report
$[69.56, 69.95]$ as the $95\%$ confidence interval for the option price. We
obtain prices very close these values. As a comparison, the standard Longstaff
Schwartz algorithm yields $67.96 \, \pm \,0.02 $ and $69.02 \, \pm \, 0.02$ for
a polynomial regression of order $1$ and $2$ respectively. The order $3$
regression is out of reach. Our deep learning approach scales far better than
the standard polynomial regression. Increasing the number of epochs improves the result, which means that the neural network has to be much more finely tuned than in the other examples studied sofar.

\subsection{A put option in the Heston model}

We consider the Heston model defined by
\begin{align*}
  dS_t &= S_t(r_t dt + \sqrt{\sigma_t} (\rho dW^1_t + \sqrt{1 - \rho^2} dW^2_t)) \\
  d\sigma_t &=  \kappa (\theta - \sigma_t) dt + \xi \sqrt{\sigma_t} dW^1_t.
\end{align*}
For the simulation of the model, we use a modified Euler scheme with $30$ time steps per year, in which we have replaced $\sqrt{\sigma_t}$ by $\sqrt{(\sigma_t)_+}$ to deal with possibly negative values of the discretized volatility process. For the option of Table~\ref{tab:put-heston}, the standard Longstaff Schwartz algorithm yields $1.70 \, \pm \, 0.008$ (resp. $1.675 \, \pm \, 0.005$) for an order $6$ (resp. $1$) polynomial regression. As in the other examples, the use of a neural network as the regressor provides very accurate results even with a quite small network (no hidden layer and very few neurons, see the case $L=2$ and $d_l=32$).

\begin{table}[htbp]
  \centering
  \begin{tabular}{cc|ccc}
    \hline
    $L$ & $d_l$ & epochs=1 & epochs=5 & epochs=10 \\
    \hline
    2 & 32 & 1.69 ($\pm$ 0.017) & 1.7 ($\pm$ 0.017) & 1.7 ($\pm$ 0.016) \\
    2 & 128 & 1.69 ($\pm$ 0.017) & 1.7 ($\pm$ 0.019) & 1.7 ($\pm$ 0.019) \\
    2 & 512 & 1.69 ($\pm$ 0.019) & 1.69 ($\pm$ 0.019) & 1.69 ($\pm$ 0.018) \\
    4 & 32 & 1.69 ($\pm$ 0.022) & 1.69 ($\pm$ 0.017) & 1.7 ($\pm$ 0.018) \\
    4 & 128 & 1.69 ($\pm$ 0.024) & 1.69 ($\pm$ 0.02) & 1.7 ($\pm$ 0.016) \\
    4 & 512 & 1.68 ($\pm$ 0.025) & 1.69 ($\pm$ 0.022) & 1.69 ($\pm$ 0.022) \\
    8 & 32 & 1.69 ($\pm$ 0.023) & 1.69 ($\pm$ 0.02) & 1.69 ($\pm$ 0.019) \\
    8 & 128 & 1.68 ($\pm$ 0.03) & 1.69 ($\pm$ 0.022) & 1.69 ($\pm$ 0.02) \\
    8 & 512 & 1.68 ($\pm$ 0.03) & 1.68 ($\pm$ 0.041) & 1.68 ($\pm$ 0.053) \\
    \hline
  \end{tabular}
  \caption{Prices for put option in the Heston model with parameters the geometric basket put option with parameters with $S_0 = K = 100$, $T=1$, $\sigma_0 = 0.01$, $\xi=0.2$, $\theta=0.01$, $\kappa = 2$, $\rho = -0.3$, $r=0.1$, $N=10$ and $M=100,000$. } \label{tab:put-heston}
\end{table}

\section{Conclusion}

The difficulties in pricing Bermudan options come from approximating the
continuation value at each exercising date. While polynomial regression is
widely used for this step, we have investigated the use of deep learning. We
have proved the theoretical convergence of our algorithm with respect to both
the neural network and Monte Carlo approximations. Our numerical experiments
show that the prices computed using our approach are very similar to those
obtained from the standard Longstaff Schwartz algorithm. With no surprise, using
neural networks does not help much for low dimensional problems but does scale
far better on high dimensional problems as it does not suffer from the curse of
dimensionality as much as polynomial regression does. Polynomial regression
requires a relatively high order to provide accurate prices, which is not
feasible in high dimensional problems. Neural networks approximation
capabilities seem far better and relatively small networks already provided very
accurate results. Indeed, a few hundred neurons with no hidden layers were
sufficient to have very accurate prices. Training a neural network usually
requires several passes through the whole data set. Yet, in our examples this
seemed pretty much useless mostly because the functional representation of the
continuation function should not vary much over time. So, once the neural
network has been well trained, one pass over the data ($epochs =1$) is enough to
fit the network at a new date. This saves a lot of computational time. Neural
networks have proved to be a very versatile and efficient tool to compute
Bermudan option prices especially when the problem is highly non linear.

\bibliographystyle{abbrvnat}
\bibliography{biblio.bib}
\end{document}